\title{Trefftz Discontinuous Galerkin approximation of an acoustic waveguide\thanks{The research of the first author was partially supported by the US AFOSR under grant number FA9550-23-1-0256. The research of the second author was partially supported by MICINN/AE/doi 10.13039/501100011033 grants PID2020-114173RB-I00 and PID2023-147790OB-I00 and his stay at the University of Oviedo was funded by a Margarita Salas grant, from the Ministerio de Universidades (RD 289/2021) funded by the European Union-NextGenerationEU.  The research of the third author was partially supported MICINN grant PID2020-116287GB-I00.}}
\author{Peter Monk\thanks{Mathematical Sciences Department, University of Delaware, Newark, DE 19716, USA (email: \texttt{monk@udel.edu}).} \and Manuel Pena\thanks{Depto. Matem\'atica Aplicada I, Universidade de Vigo, Campus das Lagoas-Marcosende, 36310 Vigo, Spain (email: \texttt{manuel.pena@uvigo.gal}).}  \and Virginia Selgas\thanks{Depto. Matem\'aticas, EPIG, Universidad de Oviedo, 33203 Gij\'on, Spain (email: \texttt{selgasvirginia@uniovi.es}).}}
\newtheorem{remark}{Remark}
\newtheorem{lemma}{Lemma}
\newtheorem{theorem}{Theorem}
\newcommand{\xbold}{\boldsymbol{x}}
\newcommand{\ybold}{\boldsymbol{y}}
\newcommand{\nbold}{\boldsymbol{n}}
\newcommand{\dbold}{\boldsymbol{d}}
\newcommand{\vbold}{\boldsymbol{v}}
\newcommand{\R}{\mathbb{R}}
\newcommand{\jump}[1]{\left|\! \left[ #1 \right]\!  \right|}
\newcommand{\jumpn}[1]{\left|\! \left[ #1 \right]\!  \right|_{\nbold}}
\newcommand{\av}[1]{\left\{ \!\! \left\{ #1 \right\} \!\!  \right\} }
\begin{document}

\maketitle

\begin{abstract}
We propose a modified Trefftz Discontinuous Galerkin (TDG) method for approximating a time-harmonic acoustic scattering problem in an infinitely elongated waveguide. In the waveguide we suppose there is a bounded, penetrable and possibly absorbing scatterer. The classical TDG is not applicable to this important case. Novel features of our modified TDG method are that it is applicable when the scatterer is absorbing, and it uses  a stable treatment of the asymptotic radiation condition for the scattered field. For the modified TDG, we prove $h$ and $p$-convergence in the $L^2$ norm. The theoretical results are verified numerically for a discretization based on plane waves (that may be evanescent in the scatterer).
\end{abstract}


\section{Introduction}\label{sec:intro}
Trefftz methods~\cite{Trefftz26} are numerical  schemes that use a superposition of known exact solutions to a problem to approximate the true solution. These methods can have very fast convergence when the solution is sufficiently regular (see e.g. \cite{barnett08}).  However in applications to acoustics, elasticity or electromagnetism the exact solution can lose regularity near corners (or edges in 3D) of the solution domain, as well as at interfaces between different materials. In order to accurately approximate  such solutions and  to help
control the conditioning of the overall linear system, it has proven useful to employ Trefftz techniques element-wise on a finite element mesh.  Appropriate inter-element continuity is imposed variationaly, and the resulting scheme is then called a Trefftz Discontinuous Galerkin (TDG) scheme.

For the Helmholtz equation, which is the subject of this paper, an error analysis of TDG  was first performed in \cite{Gittelson+al2009} (see \cite{hmp16} for a survey of TDG methods for the Helmholtz equation, and further references to the literature). In \cite{Gittelson+al2009}, they noted that, when a special choice of parameters is made in TDG, the method is equivalent to another Trefftz method: the Ultra Weak Variational Formulation (UWVF) proposed in \cite{cessenat96,CessenatDespres1998} and analyzed in \cite{bp08}.  But this 
equivalence only holds when there is no loss (or absorption) in the material so that the coefficients in the Helmholtz equation are real-valued. Indeed the standard TDG is not applicable when there is loss since the derivation assumes real coefficients and breaks down for complex coefficients. The TDG method needs to be modified in the presence of loss, and this is one of the main novelties of this paper.

We are interested in  simulating the propagation of time-harmonic waves along a tubular waveguide in order to produce data for testing inverse scattering algorithms.  TDG is an attractive alternative to standard solvers such as the Finite Element Method (FEM) because it is more efficient at solving higher frequency problems, cf.~\cite{Lahivaara24}.  

In order to apply the TDG method to this problem, we first rewrite the 
problem on a bounded computational domain by using the Neumann-to-Dirichlet (NtD) map on the truncation walls. This choice, rather than the more standard Dirichlet-to-Neumann map, is helpful since traces of discrete TDG fields are only in $L^2$ on the boundary. A second contribution of this paper is a variational formulation, related to least squares methods, that incorporates the NtD map in a stable way.

Typically, for FEM, a Perfectly Matched Layer (PML) is used to terminate the bounded section of the waveguide, but the presence of evanescent and travelling waves in the solution complicates the design of the PML. In addition the PML cannot be implemented in the standard TDG since it cannot handle loss because PML parameters are complex valued. In contrast the NtD map  has only one discretization parameter which can be picked a priori in a simple way, and generalizes to arbitrary cross section waveguides.

After modifying the TDG scheme to accept lossy materials, and to use the NtD map, we prove convergence of the resulting scheme via modifications of the techniques in \cite{Gittelson+al2009,hiptmair+al2014}.  In particular, we obtain a  consistent and coercive formulation.  This achieves  convergence with respect to $h$ (mesh size) and $p$ (number of Trefftz functions per element) when discretized with plane waves (under an assumption that the approximation theory of plane waves extends to the Helmholtz equation with complex valued coefficients). We illustrate the convergence theory by numerical experiments in two cases: first an example without loss testing the NtD implementation, and then an example with loss where we compare to a finite element solution.

One potential advantage of the TDG scheme over the UWVF is the presence of parameters that can be tuned to improve the accuracy of the solution or the conditioning of the discrete problem.  In \cite{hiptmair+al2014} TDG methods on highly refined grids were used to approximate the solution near corners in the domain where regularity is lost. In that case mesh dependent parameters were needed to prove error estimates.  
However in \cite{congreve17} it was reported that the effect of these mesh dependent parameters is not significant.  The tubular waveguide used in our study allows for a simple test of mesh dependent parameters in a problem with a smooth solution: we report results for  wave propagation down a simple tube through a  refined mesh region.

We remark that the study  here does not address the ill-conditioning that can appear in TDG methods based on propagating plane waves, cf. \cite{Huttunen2007,congreve19}. {For some novel potential mitigation strategies not considered here, see~\cite{parolin23} and \cite{barucq21}.}

The paper is structured as follows. In Section~\ref{sec:modelprob} we introduce the model problem of acoustic scattering in an infinite waveguide, inside of which there is a penetrable and possibly absorbing scatterer. For this problem, in Section~\ref{sec:varform} we propose a consistent TDG weak formulation. Then, in  Section~\ref{sec:convTrefftz} we analyze this modified TDG formulation both at the continuous and 
discrete level, first in a mesh dependent norm and finally in the $L^2$ norm. We provide  error bounds for the convergence when using a discretization with plane waves. The convergence order is illustrated in  Section~\ref{sec:numerics}, 
where we provide numerical results for wave propagation down a simple tube and compare to solutions from a finite element method for a lossy scatterer. We also investigate mesh 
dependent parameters. Finally, we draw some conclusions in Section~\ref{sec:conclusion}. 


\section{Model problem}\label{sec:modelprob}
We consider an infinite tubular waveguide $\Omega=\R\times\Sigma\subset \R^m$ ($m=2$ or $3$) with a constant cross section $\Sigma\subset\R^{m-1}$ that is bounded and convex, and, for $m=3$ piecewise smooth  in $\R^{m-1}$.  

We remark that the upcoming analysis applies to more exotic configurations in which $\Omega$ has the shape of a tubular waveguide away from a bounded section. That is, it suffices that there exists $R'>0$ such that $\Omega \cap \{ (x_1,\hat{x})\in\R\times\R^{m-1}; \, |x_1|> R'\} = ((-\infty,-R') \cup (R',\infty) )\times \Sigma $, as long as the associated problem is well posed for the given wave number  and the solution is sufficiently smooth. 

We assume that the waveguide is filled with a homogeneous and isotropic material, and that inside this background material there is a penetrable scatterer that occupies a Lipschitz smooth and bounded region $\Omega_i $ which, for simplicity, we assume does not touch the boundary of the waveguide (i.e. $\overline{\Omega_i}\subset \Omega$). When necessary, we identify a point $\xbold$ in the waveguide $\Omega=\R\times\Sigma$ with $(x_1,\hat{\mathbf{x}})$, where $x_1\in \R$ and $\hat{\mathbf{x}}\in\Sigma$. We will also denote by $\Sigma_c=\{c\}\times\Sigma$ the cross section at a given $x_1=c$ (for $c\in\mathbb{R}$) away from the scatterer.

The bounded computational domain in which we apply TDG is a finite section $\Omega_{R}=(-R,R)\times\Sigma$, see Fig.~\ref{fig:crosssection}. We take $R>0$ big enough so that the scatterer is completely contained in such a section, that is, $\overline{\Omega_i}\subset\Omega_{R}$. Moreover, we denote the exterior of the scatterer in $\Omega_{R}$ by $\Omega_{e,R}=\Omega_R\setminus\overline{\Omega_{i}}$ and we split its boundary in the following parts: 
\begin{itemize}
\item the left and right truncation walls $S_R=\Sigma_{-R}\cup\Sigma_{+R}$; 
\item the bounded section  $\Gamma_{R}=(-R,R)\times\partial\Sigma$ of the boundary of the whole waveguide $\Gamma=\partial\Omega = \R\times\partial\Sigma$;
\item and the boundary of the scatterer, $\partial\Omega_i$.
\end{itemize}

The forward problem consists of determining the response of the system when some known incident field $u_{inc}$ strikes the target. We suppose that the incident field solves the Helmholtz equation in $\Omega_R$: 
$$
\Delta  u_{inc} +k^2\, u_{inc}=0\quad \mbox{in }  \Omega_R\, ,
$$
where $k \in \R$, $k>0$,  denotes the wave number in the waveguide; moreover, assuming that the wall of the waveguide is sound-hard, the incident field must also satisfy 
\begin{equation}\label{bcuinc_soundhard}
\partial_{\mathbf{n}_{\Gamma_R} }  u_{inc} =0 \quad \mbox{on } \Gamma_{R}\, ,
\end{equation}
where $\mathbf{n}_{\Gamma_R}$ is the unit normal vector on  $\Gamma_R$ that points out of $\Omega_R$. 
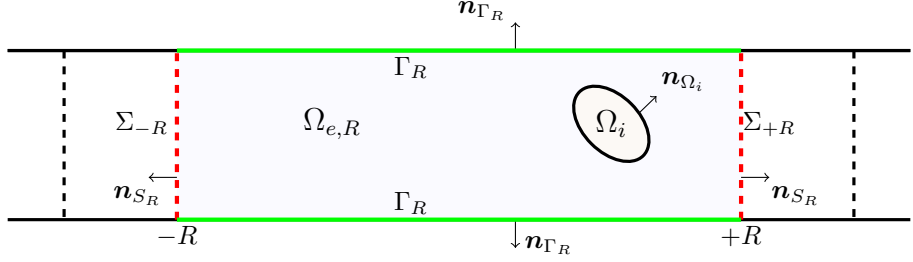
\begin{figure}
   \centering
   \begin{tikzpicture}[scale=0.75]
    \fill[color=blue!2] (-1,-1.5) rectangle (9,1.5);
    \draw [very thick](-4,1.5) -- (12,1.5);
    \draw [very thick](-4,-1.5) -- (12,-1.5);
    \draw [color=red, ultra thick, dashed](9,-1.5) -- (9,1.5);
    \draw [color=red, ultra thick, dashed](-1,-1.5) -- (-1,1.5);
    \filldraw[fill=brown!5, very thick,rotate around={45:(6.7,0.2)}](6.7,0.2) ellipse (0.5 and 0.8);
    \node[font = \large] at (1.75, 0.2){$ {\Omega_{e,R}}$};
    \node[font = \large] at (6.7, 0.2){$\displaystyle\Omega_i$};
    \node[font = \normalsize] at (-1.625,.2){$\displaystyle\Sigma_{-R}$};
    \node[font = \normalsize] at (-1,-1.8){$\displaystyle -R$};
    \node[font = \normalsize] at (9.5,0.2){$\displaystyle\Sigma_{+R}$};
    \node[font = \normalsize] at (9,-1.8){$\displaystyle +R$};
    \draw [->](7.2,0.4) -- (7.5,0.69) node[near end, above right,font = \normalsize] {$\displaystyle \nbold_{\Omega_i}$};
    \draw [->](-1,-0.75) -- (-1.5,-0.75) node[near end, below left,font = \normalsize] {$\displaystyle\nbold_{S_{R\hspace*{-.2cm}} }$};
    \draw [->](9,-0.75) -- (9.5,-0.75) node[near end, below right,font = \normalsize] {$\displaystyle\nbold_{S_{R}}$};
    \draw [->](5,-1.5) -- (5,-2) node[near start, below right,font = \normalsize] {$\displaystyle \nbold_{\Gamma_R}$};
    \draw [->](5,1.5) -- (5,2) node[near end, above left,font = \normalsize] {$\displaystyle \nbold_{\Gamma_R}$};
    \draw [very thick](-4,1.5) -- (12,1.5);
    \draw [color=green, ultra thick](-1,-1.5) -- (9,-1.5);
    \draw [color=green, ultra thick](-1,1.5) -- (9,1.5);
    \node[font = \normalsize ] at (3.15,-1.2){$\displaystyle \Gamma_R$};
    \node[font = \normalsize] at (3.15,1.2){$\displaystyle \Gamma_R$};
    \end{tikzpicture}
    \caption{A sketch of the geometry of the model problem.}
    \label{fig:crosssection}
\end{figure}

We want to determine the response of the system, which we next formalize in terms of the total field $u$ or, equivalently, in terms of the scattered field $u_{sc}=u-u_{inc}$. The total field satisfies the
following source free Helmholtz equation:
\begin{equation}\label{eq:Helm}
    \Delta u +k^{2}\,\mathrm{n} \, u= 0\quad\mbox{in }  \Omega_R \, .
\end{equation}
The scatterer is characterised by the  coefficient function $\mathrm{n}$ defined piecewise by $\mathrm{n}=1$ in $\Omega_{e}$ and $\mathrm{n}=\mathrm{n}_i(\xbold)$ in $\Omega_i$. Since we are using a Trefftz method, we assume that $ \mathrm{n}_i$ is piecewise constant in $\Omega_i$, and that its real part is uniformly positive whereas its imaginary part is non-negative:
$$
\Re (\mathrm{n}_i )\geq \mathrm{n}_0>0 \quad\mbox{and}\quad \Im (\mathrm{n}_i )\geq 0 \quad\mbox{a.e. in } \Omega_i \, .
$$
Physically, $\Re(\mathrm{n}(\xbold))=c_0^2/c(\xbold)^2$ where $c_0$ is the sound speed in the background medium, and $c(\xbold)\not=c_0$ is the sound speed in the scatterer. In addition, if the imaginary part is positive, $\Im(\mathrm{n})>0$, there is loss in the medium.

In order to impose transmission conditions on $\partial\Omega_i$, and because TDG uses discontinuous functions on the grid, we need to define jumps and average values of functions on adjacent elements. Suppose two non-intersecting domains $\mathcal{O}$ and $\mathcal{O}'$ meet on a common surface $\mathcal{S}$, and have unit outward normals  $\nbold_{\mathcal{O}}$ and $\nbold_{\mathcal{O}'}$ respectively.  Then if $f_{\mathcal{O}}\in H^1(\mathcal{O})$ and
$f_{\mathcal{O}'}\in H^1(\mathcal{O}')$ we
can define the average value on ${\cal S}$ by
\[
\av{f}=\frac{f_{\mathcal{O}}+f_{\mathcal{O}'}}{2}\quad\mbox{ on } \mathcal{S}.
\]
The same notation is used for the average value
of a vector quantity. We also define in the standard way (cf.~\cite{arn02a}) the normal jump of scalar and vector fields. In particular, for a scalar field $f$ the normal jump is a vector quantity:
\[
\jump{f}_{\nbold}=f_{\mathcal{O}}\,\nbold_{\mathcal{O}}+f_{\mathcal{O}'}\,\nbold_{\mathcal{O}'}\quad\mbox{on } \mathcal{S};
\]
whereas for a smooth enough vector field $\vbold$ defined in $\mathcal{O}$ and $\mathcal{O}'$ the normal jump is a scalar quantity:
\[
\jump{\vbold}_{\nbold}=\vbold_{\mathcal{O}}\cdot\nbold_{\mathcal{O}}+\vbold_{\mathcal{O}'}\cdot \nbold_{\mathcal{O}'}\quad\mbox{on } \mathcal{S}.
\]
Then we can state the transmission conditions of our model problem as follows:
\begin{equation*}
    \jump{ u}_{\nbold}= \boldsymbol{0} \mbox{ and }
    \jump{\nabla u}_{\nbold} =0 \quad  
    \mbox{ on } \partial\Omega_i \, .
\end{equation*}
We still need to impose suitable {boundary conditions} for our forward problem on the boundary of the computational domain.  For a waveguide with a sound-hard wall we have
$$ 
\partial_{\nbold_{\Gamma_R}\! } u = 0 \quad\mbox{on } \Gamma _R \, .
$$ 

On the artificial truncation walls, we need to impose a radiation  condition on the scattered field $u_{sc}$ that represents that its counterpart in the time domain is a superposition of outgoing guided modes (i.e. guided modes propagating away from the obstacle $\Omega_{i}$) or modes  that are decaying exponentially with respect to the distance from the obstacle $\Omega_{i}$), cf. \cite{bourgeois+luneville2008}.  To this end: 
\begin{itemize}
\item Consider the sequence $\{k_{j}^2\}_{j=0}^{\infty}\subset [0,+\infty)$ of eigenvalues of the Neumann problem for the negative Laplacian on $\Sigma$:
	\begin{equation*}
		\left\{\begin{array}{ll}
		-\Delta_{\hat{\xbold}}\theta_j  =k_j^2\, \theta_j 
		\quad & \mbox{in } \Sigma\\[1ex]
		 \partial_{\hat{\nbold }_{\Sigma}}\theta_j   
		=0 & \mbox{on }\partial\Sigma
		\end{array}\right.
	\end{equation*}
where $\hat{\nbold}_{\Sigma}$ represents the unit vector field normal to $\partial\Sigma$ that points outside of $\Sigma\subset\R^{m-1}$. Such eigenvalues have no finite accumulation point, and can be enumerated in non-decreasing order (including their finite multiplicity) so that $k_0=0$, $k_{j+1}\geq k_j $ for every $j=0,1,\ldots$, and  $k_j\rightarrow  \infty$ when $j\rightarrow\infty$. Moreover, for each  eigenvalue $k_j^2$ we may take an associated eigenfunction $\theta_j $ so that  $\{\theta_j\}_{j=0}^\infty$ is an orthonormal basis of $L^2(\Sigma)$.
\item We can now define the longitudinal wave number  $\beta_j=\sqrt{ k^2-k_j^2}\in\mathbb{C}$ for each $j=0,1,\ldots$ by choosing the branch of the square root for which  $\Im(\beta_j)\geq 0 $, and consider the waveguide modes
    \begin{equation}\label{def:modes} 
        g_j^\pm(x_1,\hat{\xbold})
         \, =\, e^{\pm i\beta_j x_1}\, \theta_j(\hat{\xbold }) \quad\mbox{for }\xbold =(x_1,\hat{\xbold})\in\Omega \, .
    \end{equation}
\end{itemize}
We assume that $k \in\R\setminus\{ k_j\}_{j=0}^\infty$ (i.e not a cutoff wave number) so that $\beta_j\neq 0$ for all $j=0,1,\ldots$. Under this assumption, there exists $N_{\rm{}pr}\in\mathbb{N}$ such that $k_j^2< k^2$ for $j\leq N_{\rm{}pr}$ and $k_j^2>  k^2 $ for $j>N_{\rm{}pr}$. In consequence:
\begin{itemize}
    \item When $j\leq N_{\rm{}pr}$, the waveguide modes $g_j^\pm(x_1,\hat{\xbold}) $ defined  by (\ref{def:modes}) correspond to travelling modes that propagate from left to right ($+$) and from right to left ($-$), respectively. They are sorted according to their longitudinal wave number $\beta_0=k\geq \beta_1\geq \ldots \geq \beta_{N_{\rm{}pr}} >0$.
    \item When $j > N_{\rm{}pr}$, the waveguide modes $g_j^\pm(x_1,\hat{\xbold})$ correspond to {decaying modes} that decrease exponentially from left to right ($+$) and from right to left ($-$), respectively. They are sorted according to their decay rate $    0< |\beta_{N_{\rm{}pr}+1}| \leq |\beta_{N_{\rm{}pr}+2} |\leq \ldots $ 
\end{itemize}

Normally for FEM, the Dirichlet-to-Neumann (DtN) map is used to prove the existence of solutions to the waveguide problem (except -at most- at a discrete set of exceptional wave numbers $k$), see for example~\cite{monk+selgas2012}. However, in the TDG method the associated functions are only piecewise smooth and so it is more convenient to use the Neumann-to-Dirichlet (NtD) operator, which has the following expression in terms of modes:
\begin{equation}\label{def:NtD}
 (\mathcal{N} f)|_{\Sigma_{\pm R}}  =   - \sum_{j=0}^{\infty} \frac{i}{\beta_j}  \,\int_{\Sigma_{\pm R}}f\,\overline{\theta_j}\, dS \,  \theta_j     
\quad  \mbox{on } S_R= \Sigma_{-R}\cup  \Sigma_{+ R}  \, ,
\end{equation}	
and defines a bounded operator $ {\cal N} : \widetilde{H}^{-1/2}(S_{ R} )\to H^{1/2}(S_{R} )$, where $ \widetilde{H}^{-1/2} (S_{ R}) $ represents the dual space of $ H^{1/2}(S_{ R})=\{ g|_{S_R}\,;\,\, g\in H^{1/2}(\partial\Omega_{ R})\} $. We then rewrite the radiation condition as
$$
u_{sc}  = {\cal N}   (\partial_{\nbold_{S_{R}}} u_{sc}  )  \quad\mbox{on }S_{R}\, .
$$
Notice that the formulation of the problem written for the scattered field involves a non-zero right hand side in the Helmholtz-like equation inside $\Omega_i$, which  cannot be handled by the classical TDG method. Thus, we write the problem in terms of the total field $u:\Omega_R\to\mathbb{C}$ as follows:
\begin{equation}\label{forwardp}
	\left\{
	\begin{array}{ll}
	\Delta u +k^{2}\,\mathrm{n}\, u     =0 \hspace{.5cm}& \mbox{in }\, \Omega_{i} , \\[1ex]
	\Delta u +k^{2} \, u =0	\hspace{.5cm}& \mbox{in }\, \Omega_{e,R},\\[1ex]
        \jump{ u }_{\nbold_{\partial\Omega_i}} =\boldsymbol{0} \, ,\,\, \jump{ \nabla u }_{\nbold_{\partial\Omega_i}} =0 &  \mbox{on }\, \partial\Omega_i, \\[1ex]
	\displaystyle \partial_{\nbold_{\Gamma_{R}} } u  =0 &\mbox{on } \Gamma_{R},\\[1ex]
	\displaystyle u  = {\cal N}  (\partial_{\nbold_{S_{R}}} u)  +	(u_{inc} - \mathcal{N} (\partial_{\nbold_{S_{R}}} u_{inc}  ) )\hspace{.25cm}&\mbox{on }  S_{ R}.
	\end{array}
     \right.
\end{equation}
Let us emphasize that (\ref{forwardp}) is equivalent to the problem with the radiation condition written using the DtN map  $\mathcal{D}=\mathcal{N}^{-1}$, which in turn is well posed except for, at most, a discrete set of exceptional wave numbers with no finite accumulation points, see the comment above (\ref{def:NtD}). Besides the constraint that $k$ is not a cutoff wave number, we also assume that
$k$ is not one of these exceptional wave numbers.  Note that  the problem is well posed for every wave number when the imaginary part of the contrast $\mathrm{n}$ is positive in a non-empty domain.

 
\section{A modified TDG formulation}\label{sec:varform}

Let $\{\mathcal{T}_h\}_{h>0}$ be a family  of finite element meshes  of $\Omega_R$ that are conforming with the piecewise constant contrast $\mathrm{n}$ and where  $h$ denotes the meshwidth. In particular, for each element $K\in\mathcal{T}_h$ we denote its diameter by   
$h_K={\rm{}diam}(K)$ so that $h=\max_{K\in\mathcal{T}_h} h_K$. We also recall the  chunkiness parameter $\displaystyle s_K=\rho_K/h_K>0$ where $\rho_K$ is the diameter 
of the largest inscribed circle ($m-2$) or sphere ($m=3$) in $K$. We then assume that each mesh $\mathcal{T}_h$ is  non-degenerate and  quasi-uniform:
$$
    \inf_{ h>0} \big\{ \min_{K\in\mathcal{T}_h} s_K \big\} >0 \quad\mbox{and}\quad 
   \sup_{ h>0} \big\{ \displaystyle \max_{K\in\mathcal{T}_h}\frac{h}{ h_K} \big\}
  <\infty\, .
$$
In order to use results derived from Vekua's theory for the Helmholtz operator in the elements of these meshes in~\cite{hiptmair+al2011VekuaTh}, we consider {Lipschitz convex elements}, e.g. triangles or rectangles in dimension $m=2$, and tetrahedra, pyramids or hexahedra in dimension $m=3$. We will refer to the edges (if $m=2$) or faces (when $m=3$) of each element $K$  by {facets}, and to a common one $K_1\cap K_2 $ by {interfacet}. Then, the facets define the skeleton of the mesh and the interfacets its inner part:
$$
\mathcal{E}_h = \bigcup_{K\in\mathcal{T}_h} \{ E\, ; \,\, E\mbox{ a face of  }  K \} \quad\mbox{and}\quad 
\mathcal{E}_h^I =  \!\! \displaystyle\{E\in \mathcal{E}_h\, ; \,\,  E\not\subset\partial\Omega_R \} \, .
$$

We next derive a {Discontinuous Galerkin (DG)} formulation for {Trefftz-type} elements as introduced in \cite{Gittelson+al2009}. As usual for the derivation of a TDG, we start by multiplying  the Helmholtz equation of (\ref{forwardp}) on an element $K\in\mathcal{T}_h$ by the complex conjugate of a smooth test function $v$ and integrating by parts  {twice} to obtain:
\begin{equation}\label{eqn:equal}
\int_K u \, ( \overline{\Delta v+k^2\,\overline{\mathrm{n}}\,v}) \, d\xbold = \int_{\partial K} (-\nbold_K\cdot\nabla u \, \overline{v} + u\, \nbold_K \cdot\nabla\overline{v} ) \, dS_{\xbold } \, ,
\end{equation}
where $\nbold_K$ is the unit normal vector on $\partial K$ that points outside of $K$. 

In the standard TDG approach it is usual to assume that local trial functions satisfy $\Delta u+k^2\,\mathrm{n}\,u=0$ and test functions would need to satisfy $\Delta v+k^2\,\overline{\mathrm{n}}\,v=0$.  Unless $\overline{\mathrm{n}}=\mathrm{n}$, the trial and test spaces differ and this Petrov-Galerkin approach has not been analyzed.  To cope with complex valued $\mathrm{n}$ as is needed to model lossy materials, we instead choose a standard trial and test space and modify the TDG sesquilinear form. In particular, using a local Trefftz trial and test space given by
$$
\begin{array}{c}
V(K)=\{ w\in H^2(K) \, ; \, \, \Delta  {w}+k^2\mathrm{n}\, {w} = 0 \,\,\mbox{in } K\} \, ,
\end{array}
$$ 
the unknown $u|_K\in V(K)$ needs to satisfy 
\begin{equation} \label{eq:varform0_K}
\int_K k^2\,(\mathrm{n}-\overline{\mathrm{n}})\,u\,\overline{v}\, d\xbold = \int_{\partial K} (-\nbold_K\cdot\nabla u \, \overline{v} + u\, \nbold_K \cdot\nabla\overline{v} ) \, dS_{\xbold } \quad\forall v\in V(K)
\, ;
\end{equation}
where we have used the fact that $\Delta \overline{v}=
-\overline{k^2\, {\mathrm{n}}\, v}=-k^2\,\overline{\mathrm{n}}\,\overline{v}$ for each $v\in V(K)$ in (\ref{eqn:equal}). We next consider the global Trefftz space  
$$
\begin{array}{c}
V(\mathcal{T}_h)=\{ w\in L^2(\Omega_R) \, ; \,\, w|_K\in V(K)\,\,\forall K\in\mathcal{T}_h\}\, ,
\end{array}
$$
and add (\ref{eq:varform0_K}) over the elements of the mesh. Then the unknown $u\in V(\mathcal{T}_h)$ must satisfy that
\begin{equation}\label{eq:varform0}
\int_\Omega k^2\,(\overline{\mathrm{n}}-\mathrm{n})\,u\,\overline{v}\, d\xbold +\sum_{K\in\mathcal{T}_h}\!\int_{\partial K}\!  (-\nbold_K\cdot\nabla_h u \, \overline{v} + u\, \nbold_K \cdot\nabla_h\overline{v} ) \, dS_{\xbold } = 0 \quad\forall v\in V(\mathcal{T}_h) \, .
\end{equation}
Hereafter, $\nabla_h$ represents the piecewise gradient in the {broken Sobolev space}
$$
H^1(\mathcal{T}_h)= \{ w\in L^2(\Omega_R)\, ;\,\, w|_K\in H^1(K) \,\,\forall  K\in\mathcal{T}_h\} \, , 
$$
that is, for $w\in H^1(\mathcal{T}_h)$ we have $\nabla_h w\in L^2(\Omega_R) $ defined piecewise by $ (\nabla_h w) |_K =\nabla (w|_K) $  in each $ K\in\mathcal{T}_h $. We can  rewrite (\ref{eq:varform0}) using the  \emph{DG magic formula}, cf.~\cite{MelenkEsterhazy12}. Using also the fact that the exact solution of (\ref{forwardp}) satisfies that $\jumpn{u} =   \boldsymbol{0}  $ and $ \jumpn{\nabla_h u} = 0 $  for all $ E\in\mathcal{E}_h^I $ we see that $u\in V(\mathcal{T}_h)$ satisfies
\begin{equation}\label{eq:varform1pre}
\begin{array}{l}
\displaystyle\int_\Omega k^2\,(\overline{\mathrm{n}}-\mathrm{n})\,u\,\overline{v}\, d\xbold+\sum_{E\in\mathcal{E}_h^I }  \int_{E} (  {u} \,\jumpn{\nabla_h\overline{v}  } -  \nabla_h u\cdot \jumpn{\overline{v}  } )  \, dS_{\xbold }  \\
\displaystyle\quad +\sum_{E\in\mathcal{E}_h\setminus\mathcal{E}_h^I } \int_{E} (   {u}  \, {\nabla_h\overline{v}  }\cdot\nbold -   {\nabla _h u  } \cdot\nbold \,  {\overline{v}  } )  \, dS_{\xbold } \, = \, 0\quad \forall v\in V(\mathcal{T}_h)\, .
\end{array}
\end{equation}
Equation (\ref{eq:varform1pre}) involves: $u$ in the absorbing parts of the scatterer; and only traces of the original unknown $u$ and of the auxiliary field $\nabla_h u$ on facets that are not inside the absorbing parts of  the scatterer. To handle such traces at the discrete level where the trial and test functions are discontinuous, we start by following \cite{Gittelson+al2009} and replace them  formally by suitable numerical fluxes, in which we penalize undesired jumps.  
In particular, we define the following {primal fluxes} on the  facets:
\begin{itemize}
\item if $E\in\mathcal{E}_h ^I$, we enforce the continuity of traces and normal derivatives by taking
$$
 \begin{array}{l}
\displaystyle \hat{u} = \av{ u}  -i\, \frac{b}{k}\, \jumpn{ \nabla _h u  } \, ;
\\[1ex]
\displaystyle ik\, \hat{\boldsymbol{\sigma}} =    \av{ \nabla_h u } +iak\, \jumpn{u }  \, ;
\end{array}
$$
\item if $E\in\mathcal{E}_h$ is on  $\Gamma_R$, the  sound hard condition $\partial_{\nbold}u=0$ suggests us to consider
$$
\begin{array}{l}
\displaystyle \hat{u} =  u-i\, \frac{d_1}{k}\,  { \nabla _h u  }\cdot\nbold \, ;
\\
\displaystyle ik\, \hat{\boldsymbol{\sigma}} = \boldsymbol{0} \, ;
\end{array}
$$
\item when $E\in\mathcal{E}_h$ is on $S_{R}$, we take into account the decaying condition $ \displaystyle  u  = {\cal N}  (\partial_{\nbold } u)  +	u_{inc} - \mathcal{N} (\partial_{\nbold } u_{inc}   ) $ by defining
$$
\begin{array}{l}
\displaystyle \hat{u} =  {\cal N}   (\nabla_h u \cdot\nbold)  + u_{inc} - \mathcal{N} (\partial_{\nbold } u_{inc}  )\\
\qquad\qquad -  i\, k\, d_2 \,\mathcal{N}^* \left( {\cal N}   (\nabla_h (u-u_{inc}) \cdot\nbold)  - (u-u_{inc}) \right) ;
\\[1ex]
\displaystyle ik\, \hat{\boldsymbol{\sigma}} =    { \nabla_h u } -i\, k\, d_2   \left( {\cal N}   (\nabla_h (u-u_{inc}) \cdot\nbold)  -(u-u_{inc})  \right)  \nbold\, ,
\end{array}
$$
where $\mathcal{N}^* : L^2(S_{R})\to L^2(S_{R})$ represents the adjoint of the NtD map:
\begin{equation*}
\begin{array}{l}
\mathcal{N}^*  f  =   \displaystyle \sum_{j=0}^{\infty}    \frac{i}{\,\overline{\beta_j}\,}   \, \int_{\Sigma_{\pm R}} f\,\overline{\theta_j}\, dS \, {\theta_j}  \quad\mbox{on }\Sigma_{\pm R}\, .
\end{array} 
\end{equation*}	
This flux is novel and chosen so that it provides control over the jump in the normal derivative across the truncation boundaries.
\end{itemize}
In general, the flux parameters $a,b ,d_1,d_2 \in\R$ should be  strictly positive piecewise constant functions on $\mathcal{E}_h$ which are bounded below uniformly with respect to $h$. However, for simplicity we will choose them to be  strictly positive constants independent of $K\in\mathcal{T}_h$ for the analysis, e.g. equal to $1/2$ as usual in UWVF formulations. We will later state further theoretical requirements on them and also investigate their role numerically, see Sections~\ref{sec:convTrefftz} and \ref{sec:numerics}. 

Substituting the proposed fluxes in (\ref{eq:varform1pre}) and collecting terms
we see that $u\in V({\cal T}_h)$ satisfies
\begin{equation}\label{eq:varfor-uexact}
\begin{array}{l}
\mathcal{A}_h (u ,v  ) \, =\,  
 \displaystyle  {L_h(v)} \quad \forall v\in V(\mathcal{T}_h) \, .
\end{array}
\end{equation}
Here the sesquilinear form is defined for $w,v \in V(\mathcal{T}_h) $ by
\begin{equation}\label{Ahdef}
\begin{array}{l}
\mathcal{A}_h (w ,v  ) \, =\,  
\displaystyle 2i \int_\Omega k^2\, \Im( \mathrm{n})\,w\, \overline{v}\, d\xbold-\int_{\Gamma _R}  \!\!  (w-i\frac{d_1}{k}\nabla_h w\cdot\nbold )\, {\nabla_h\overline{v }  }\cdot\nbold  \, dS_{\xbold }\\[1ex]
\displaystyle\quad +\sum_{E\in\mathcal{E}_h^I}  \!\int_{E} \!\! \big(  (-\!\av{ w}\! +i\frac{b}{k} \jumpn{\nabla_h w} )  \jumpn{\nabla_h\overline{v }  }  \!  + ( aik\jumpn{w}\! +\!\av{\nabla_h w} )  \jumpn{\overline{v }  }\! \big)  \, dS_{\xbold }   \\[1ex]
\displaystyle\quad
- \int_{S_{R }}\!\!\big( \mathcal{N}( \nabla_hw\cdot\nbold) \,\nabla_h \overline{v}\cdot\,\nbold- \nabla_h w\cdot\nbold\, \overline{v}  \, \big)\, dS_{\xbold}\\[1ex]
\displaystyle\quad + \int_{S_{R }}\!\! d_2\, i\, k\,   \big(\mathcal{N}  (\nabla_hw\cdot\nbold)-w \big) \, \overline{\big(\mathcal{N}(\nabla_h {v}\cdot\nbold )-{v}\big)}   \, dS_{\xbold} \, ,
\end{array}
\end{equation}
and the antilinear map $L_h$ is defined for all $v\in V({\cal T}_h)$
by
$$
\begin{array}{l}
 \displaystyle  L_h ( v  ) = \int_{S_{R }}\!\! \Big(   {( u_{inc} -\mathcal{N}(\partial_{\nbold} u_{inc}) )} \, \overline{\nabla_h  v \cdot\,\nbold} \\
 \hspace*{2cm} \displaystyle  +   \,  d_2\, i\, k \,  \big(\mathcal{N} ( \partial_{\nbold} u_{inc}  )-u_{inc} \big) \, \overline{ \big(\mathcal{N} (\nabla_h {v}\cdot\nbold)- {v} \big) } \, \Big)\, dS_{\xbold}   \, .
\end{array}
$$

Accordingly, when the Trefftz space $V(\mathcal{T}_h)$ is discretized with a finite dimensional subspace $V_h(\mathcal{T}_h)$, we propose the following discrete modified TDG formulation:
\begin{equation}\label{forward_Trefftz-DG}
\left\{
\begin{array}{l}
\mbox{Find}\,\, u_h\in V_h (\mathcal{T}_h) \,\, \mbox{s.t. }   \\[1ex]
\mathcal{A}_h (u_h,v_h) =  {L_h ( v_h )} \quad \forall v_h\in   V_h (\mathcal{T}_h)
\end{array}
\right.
\end{equation}
For later use, we note that, by construction, the proposed TDG formulation is consistent.
\begin{lemma}
\label{lem:varfor-uexact}
The solution $u\in H^2(\Omega_{R})$ of the original problem (\ref{forwardp}) belongs to $V(\mathcal{T}_h)$ and satisfies (\ref{eq:varfor-uexact}).
\end{lemma}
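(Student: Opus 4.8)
The plan is to verify the two conditions in the lemma separately: first, that the exact solution $u$ of (\ref{forwardp}) lies in $V(\mathcal{T}_h)$; and second, that it satisfies the variational identity (\ref{eq:varfor-uexact}). The first part is essentially immediate: the PDEs in the first two lines of (\ref{forwardp}) say precisely that $\Delta u + k^2\,\mathrm{n}\,u = 0$ in each subregion, and since the mesh $\mathcal{T}_h$ is assumed conforming with the piecewise constant contrast $\mathrm{n}$, every element $K\in\mathcal{T}_h$ lies entirely in either $\Omega_i$ or $\Omega_{e,R}$, so $u|_K \in H^2(K)$ with $\Delta(u|_K) + k^2\,\mathrm{n}\,(u|_K) = 0$; hence $u|_K \in V(K)$ for all $K$ and therefore $u \in V(\mathcal{T}_h)$.

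For the second part, I would simply retrace the derivation that already appears in the excerpt, but now reading it as a verification rather than a formal manipulation. Starting from (\ref{eqn:equal}), which is just integration by parts twice and holds for any $u\in H^2(K)$ and any smooth $v$, I substitute a test function $v\in V(K)$ so that $\Delta\overline v = -k^2\overline{\mathrm n}\,\overline v$, obtaining (\ref{eq:varform0_K}); then I use that $u$ solves the PDE on $K$ to cancel the volume term appropriately and sum over $K\in\mathcal{T}_h$ to reach (\ref{eq:varform0}). Applying the DG magic formula and invoking the transmission conditions $\jumpn{u}=\boldsymbol 0$, $\jumpn{\nabla_h u}=0$ on interior interfacets (which hold because $u\in H^2(\Omega_R)$, so traces match across every interfacet — both those on $\partial\Omega_i$ by the transmission conditions in (\ref{forwardp}) and those interior to $\Omega_i$ or $\Omega_{e,R}$ trivially) yields (\ref{eq:varform1pre}). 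Finally, on the boundary facets I substitute the exact boundary data: on $\Gamma_R$ the condition $\partial_{\nbold}u = 0$, and on $S_R$ the NtD radiation condition $u = \mathcal{N}(\partial_{\nbold_{S_R}}u) + u_{inc} - \mathcal{N}(\partial_{\nbold_{S_R}}u_{inc})$, so that the penalization terms $\mathcal{N}(\nabla_h(u-u_{inc})\cdot\nbold) - (u-u_{inc})$ vanish identically; collecting terms reproduces exactly $\mathcal{A}_h(u,v) = L_h(v)$.

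The only genuine subtlety — and the step I would be most careful about — is the bookkeeping at the boundary facets on $S_R$: one must check that substituting the fluxes $\hat u$ and $ik\hat{\boldsymbol\sigma}$ into (\ref{eq:varform1pre}) and then specializing to the exact solution does indeed produce the terms in (\ref{Ahdef}) and $L_h$ as written, with the $\mathcal{N}^*$ adjoint appearing in the right place. Concretely, in the flux $\hat u$ the $d_2$-penalization involves $\mathcal{N}^*$ while the corresponding flux $\hat{\boldsymbol\sigma}$ does not, and when one forms $\int_E(\hat u\,\nabla_h\overline v\cdot\nbold - ik\hat{\boldsymbol\sigma}\cdot\nbold\,\overline v)$ and uses the defining property of the adjoint $\int_{S_R}\mathcal{N}^*(g)\,\overline h = \int_{S_R} g\,\overline{\mathcal{N}(h)}$ to move $\mathcal{N}^*$ off, one recovers the symmetric-looking penalty term $\int_{S_R} d_2\,ik\,(\mathcal{N}(\nabla_h w\cdot\nbold)-w)\,\overline{(\mathcal{N}(\nabla_h v\cdot\nbold)-v)}$ in $\mathcal{A}_h$. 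Since for the exact solution the penalty argument is zero, this term drops out of the left side and only the $u_{inc}$-data part survives in $L_h$; everything else is routine algebra. So the proof is a short verification, with the adjoint manipulation on $S_R$ being the one place to write out carefully.
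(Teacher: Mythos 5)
Your proof is correct and follows essentially the same route as the paper, which establishes this lemma ``by construction'' through the very derivation of \eqref{eq:varfor-uexact} in Section~\ref{sec:varform} (the paper gives no separate proof). Your explicit check that the exact traces coincide with the numerical fluxes on $S_R$, together with the adjoint manipulation for the $\mathcal{N}^*$ penalty term, is exactly the bookkeeping the paper's construction relies on.
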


\begin{remark}
Although the emphasis of our paper is on penetrable scatterers, the TDG method can handle impenetrable scatterers as well. For example, for a sound soft scatterer the acoustic field $u$ satisfies
$$
\begin{array}{l}
\mbox{Find }\, u:\Omega_R\to\mathbb{C}\,\, \mbox{such that} \\
	\left\{
	\begin{array}{ll}
	\Delta u +k^{2}  u =0	\hspace{.5cm}& \mbox{in }\, \Omega_{e,R}\\
 	\displaystyle \partial_{\nbold_{\Gamma_{R}} } u  =0 &\mbox{on } \Gamma_{R}\\
	\displaystyle u  = {\cal N}  (\partial_{\nbold_{\Sigma_{ R}}} u)  +	(u_{inc} - \mathcal{N} (\partial_{\nbold_{\Sigma_{R}}} u_{inc}  ) )\hspace{.25cm}&\mbox{on }  S_{R}
	\end{array}
     \right.
\end{array}
$$
As usual we consider the associated skeleton $\mathcal{E}_{e,h}$ and its interior part $\mathcal{E}_{e,h}^I$, as well as the global Trefftz space on $\Omega_e$ given by 
$$
V_e(\mathcal{T}_h)=\{ v\in L^2(\Omega_{e,R}) \, ; \,\, v|_K\in V(K)\,\,\forall K\in\mathcal{T}_h\}\, .
$$ 
We can reason as for the penetrable target, and take the same fluxes as before for facets in $\mathcal{E}_{e,h}^I$ and on $\Gamma_R\cup S_{R}$, whereas for the facets on $\partial \Omega_i$ we may use~\cite{Gittelson+al2009}
$\hat{u} = 0$ and $
ik\, \hat{\boldsymbol{\sigma}} =    \nabla_h u  +iak\,  u\boldsymbol{n}   \, .
$
We then get an analogous TDG formulation in which the sesquilinear form is now
\begin{eqnarray*}
&&\mathcal{A}_{e,h} (w ,v  ) =\\&&
\qquad\sum_{E\in\mathcal{E}_{e,h}^I} \!\!\! \int_{E}\!\! \big(  (-\av{ w}+i\frac{b}{k} \,\jumpn{\nabla_h w} ) \,\jumpn{\nabla_h\overline{v }  }  + ( aik\jumpn{w}+\av{\nabla_h w} )  \, \jumpn{\overline{v }  } \big)  \, dS_{\xbold }   \\
&&\qquad + \int_{\partial\Omega_i}\!\! \big(  \nabla_h u\cdot\nbold +aik u \big)  \,\overline{v}  \, dS_{\xbold} -\int_{\Gamma _R}    (w-i\frac{d_1}{k}\nabla_h w\cdot\nbold )\, {\nabla_h\overline{v }  }\cdot\nbold  \, dS_{\xbold }\\
&&\qquad  
- \int_{S_{ R }}\!\!\big( \mathcal{N}( \nabla_hw\cdot\nbold) \,\nabla_h \overline{v}\cdot\,\nbold- \nabla_h w\cdot\nbold\, \overline{v}  \\&&\qquad\qquad- d_2\, i\, k\,   (\mathcal{N}  (\nabla_hw\cdot\nbold)-w ) \, \overline{(\mathcal{N}(\nabla_h {v}\cdot\nbold )-{v})}  \, \big)\, dS_{\xbold}
\end{eqnarray*}
and the antilinear map is as before. The upcoming analysis covers this case as well with obvious modifications, so extending standard convergence theory to NtD terminated scattering for an impenetrable scatterer.
\end{remark}


\section{Study of the discrete TDG variational formulation}\label{sec:convTrefftz}

In order to analyze the discrete TDG formulation (\ref{forward_Trefftz-DG}), we first show that the sesquilinear form $ \mathcal{A}_h (w,v ) $ is bounded and coercive for suitable norms in $V(\mathcal{T}_h)$. Then, we make use of a duality argument to study the convergence of the discrete solutions in the $L^2$ norm~\cite{MelenkEsterhazy12,hiptmairetal2011p2d}. Finally, we provide error bounds for a discretization based on plane waves under a regularity assumption on the forward problem.  The argument follows \cite{monk+wang1999,hiptmairetal2011p2d} closely except for special steps to deal with the NtD transmission conditions, and the new loss term.


\subsection{Analysis of the discrete TDG formulation}\label{ssec:studyAh}

We start by showing that the assumption on the strict positivity of the parameters $a,b,d_1,d_2\in\mathbb{R}$ is enough to guarantee that the sesquilinear form is coercive. To this end, let us consider $v\in V(\mathcal{T}_h)$, then (\ref{Ahdef}), simplified using the DG-magic formula, gives
$$
\begin{array}{l}
\Im ( \mathcal{A}_h (v ,v ) ) \, =\, 
\displaystyle 2\int_\Omega k^2\, \Im (\mathrm{n})\, |v|^2 \, d\xbold+
\displaystyle\sum_{E\in\mathcal{E}_h^I}  \int_{E} \!
\big(  a \, k\,| |\!\!\left[ v  \right]\!\! | _{\nbold} |^2   
+ \frac{b}{k}\, | |\!\!\left[ \nabla _h v  \right]\!\! |_{\nbold}  | ^2
  \big) \, dS_{\xbold } 
\\
\quad +\displaystyle  
\frac{d_1}{k}  \,   \int_{\Gamma_R}  \!|\nabla _h v\cdot\nbold |^2    \, dS_{\xbold } 
\\
\displaystyle \quad + \int_{S_{ R }} \!\!\big(  d_2 \, k \, |\mathcal{N} ( \nabla_h v\cdot\nbold )-v|^2   -\Im ( {\mathcal{N}} ( \nabla_h v\cdot\nbold) \,\nabla_h \overline{v}\cdot\nbold  ) \big)\, dS_{\xbold}  
 \, .
  \end{array}
$$
To understand the behavior of the terms on the auxiliary cross-sections, we rewrite them with modes:
$$
\begin{array}{l}
\Im ( \mathcal{A}_h (v ,v ) ) \, =\,  
\displaystyle 2\int_\Omega k^2\, \Im (\mathrm{n})\, |v|^2 \, d\xbold+
\displaystyle\sum_{E\in\mathcal{E}_h^I}  \int_{E} \!
\big(   a  k\,| |\!\!\left[ v  \right]\!\! | _{\nbold} |^2   
+\frac{b}{k}\, | |\!\!\left[ \nabla _h v  \right]\!\! |_{\nbold}  | ^2
  \big) \, dS_{\xbold } 
\\
\displaystyle \qquad  +  \frac{d_1}{k}\, \int_{\Gamma_R }\! |\nabla_h v\cdot\nbold |^2  \, dS_{\xbold } 
\\
\displaystyle \qquad + \sum_{j=0}^{N_{\rm{}pr}} \frac{1}{|\beta_j|}  \, |( \nabla_h v \cdot\nbold)_j ^- |^2  +d_2\, k \, \sum_{j=0}^{\infty}  | \frac{-i}{\beta_j} \, ( \nabla_h v \cdot\nbold )_j^- -v_j^- |^2 
\\
\displaystyle \qquad + \sum_{j=0}^{N_{\rm{}pr}} \frac{1}{|\beta_j|}  \, |( \nabla_h v \cdot\nbold)_j ^+ |^2  +d_2\, k \, \sum_{j=0}^{\infty}  | \frac{-i}{\beta_j} \, ( \nabla_h v \cdot\nbold )_j^+ -v_j^+ |^2 
 \, ,
\end{array}
$$
where we denote 
\begin{equation}\label{notation:modes}
v_j^{\pm}= \int_{\Sigma_{\pm R}} \!\! v  \, \overline{\theta_j}\, dS
\quad\mbox{and}\quad
( \nabla_h v \cdot\nbold )_j^{\pm}= \int_{\Sigma_{\pm R}} \!\!\! \nabla_h v \cdot\nbold \, \overline{\theta_j}\, dS \quad \mbox{for } j=0,1,\ldots
\end{equation}
Hence $\Im ( \mathcal{A}_h (v ,v ) ) $ is non-negative for every $v\in V(\mathcal{T}_h)$, and allows the introduction of the mesh dependent seminorm $ 
||| v|||_{h}  = (   \Im ( \mathcal{A}_h (v ,v ) ) ) ^{1/2}$. 
\begin{lemma}\label{lem:seminormisnorm}
The seminorm $|||\cdot|||_{h}$ defines a norm in $V(\mathcal{T}_h)$.
\end{lemma}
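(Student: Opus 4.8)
\emph{Proof plan.} Since it was already shown that $\Im(\mathcal{A}_h(v,v))\ge 0$ for every $v\in V(\mathcal{T}_h)$, the seminorm $|||\cdot|||_h$ is well defined and satisfies the triangle inequality and homogeneity; the only thing left to prove is definiteness, i.e.\ that $|||v|||_h=0$ forces $v=0$. So let $v\in V(\mathcal{T}_h)$ with $\Im(\mathcal{A}_h(v,v))=0$. In the mode representation of $\Im(\mathcal{A}_h(v,v))$ derived above, every summand is non-negative, so each must vanish separately. This gives: (i) $\int_\Omega k^2\,\Im(\mathrm{n})\,|v|^2\,d\xbold=0$, hence $v=0$ a.e.\ on the absorbing part of $\Omega_i$; (ii) $\jumpn{v}=\boldsymbol{0}$ and $\jumpn{\nabla_h v}=0$ on every interfacet $E\in\mathcal{E}_h^I$; (iii) $\nabla_h v\cdot\nbold=0$ on $\Gamma_R$; and (iv) on the truncation walls, using the notation (\ref{notation:modes}), $(\nabla_h v\cdot\nbold)_j^{\pm}=0$ for all $j\le N_{\rm pr}$ and $\tfrac{-i}{\beta_j}(\nabla_h v\cdot\nbold)_j^{\pm}-v_j^{\pm}=0$ for all $j\ge 0$. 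Combining the two statements in (iv) we obtain, for \emph{every} $j\ge 0$,
$$
v_j^{\pm}=\frac{-i}{\beta_j}\,(\nabla_h v\cdot\nbold)_j^{\pm}\, .
$$

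Next I would reassemble these pointwise/modal identities into a boundary value problem. By (ii), $v\in H^1(\Omega_R)$ and, integrating by parts element-by-element (the DG magic formula used in reverse), $v$ is a weak solution of $\Delta v+k^2\,\mathrm{n}\,v=0$ in all of $\Omega_R$; since the mesh conforms to $\mathrm{n}$ and $\overline{\Omega_i}\subset\Omega_R$, the facets making up $\partial\Omega_i$ belong to $\mathcal{E}_h^I$, so (ii) also encodes the transmission conditions $\jumpn{v}=\boldsymbol{0}$, $\jumpn{\nabla v}=0$ on $\partial\Omega_i$. Interior elliptic regularity near $S_R$ (which lies away from the scatterer) and transmission regularity across the Lipschitz-smooth interface $\partial\Omega_i$ put $v$ in the same regularity class $H^2(\Omega_R)$ as the solutions of (\ref{forwardp}) considered in Lemma~\ref{lem:varfor-uexact}, so its Cauchy data on $S_R$ are well defined. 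Now expand $v|_{\Sigma_{\pm R}}=\sum_j v_j^{\pm}\theta_j$ and $\nabla_h v\cdot\nbold|_{\Sigma_{\pm R}}=\sum_j (\nabla_h v\cdot\nbold)_j^{\pm}\theta_j$ in the orthonormal basis $\{\theta_j\}$: comparing the displayed modal identity with the definition (\ref{def:NtD}) of $\mathcal{N}$ shows precisely that $v=\mathcal{N}(\partial_{\nbold_{S_R}}v)$ on $S_R$. Together with (iii), $v$ therefore solves the homogeneous forward problem (\ref{forwardp}) with $u_{inc}=0$.

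Finally, by our standing assumption the forward problem (\ref{forwardp}) is well posed (equivalently, the DtN formulation is uniquely solvable, which holds because $k$ is neither a cutoff wave number nor one of the exceptional wave numbers, and automatically whenever $\Im(\mathrm{n})>0$ on a non-empty set), so the homogeneous problem has only the trivial solution; hence $v=0$. This proves that $|||\cdot|||_h$ is a norm on $V(\mathcal{T}_h)$. The only delicate point is step two, where the vanishing of the $S_R$-terms has to be recognized, via the modal formula (\ref{def:NtD}), as the exact statement of the radiation condition, and where one must check that $v$ is regular enough for $\mathcal{N}(\partial_{\nbold_{S_R}}v)$ to be meaningful; both are routine once the mode representation is in hand.
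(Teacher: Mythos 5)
Your proof is correct and follows essentially the same route as the paper: each non-negative term in the mode representation of $\Im(\mathcal{A}_h(v,v))$ must vanish, the resulting conditions identify $v$ as an $H^1(\Omega_R)$ solution of the homogeneous problem (\ref{forwardp}) with $u_{inc}=0$, and well-posedness forces $v=0$. You merely supply more detail than the paper does, in particular in recognizing the modal identity $v_j^{\pm}=\tfrac{-i}{\beta_j}(\nabla_h v\cdot\nbold)_j^{\pm}$ as the NtD radiation condition and in checking that the Cauchy data on $S_R$ are well defined.
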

\begin{proof}
Let us consider $v\in V(\mathcal{T}_h)$ such that $||| v|||_{h} =0$. Then
$$
\begin{array}{ll}
v=0&\mbox{in } \mbox{supp}(\Im(\mathrm{n}))\, ,\\[1ex]
\displaystyle
 |\!\!\left[ v  \right]\!\! | _{\nbold} = |\!\!\left[ \nabla _h v  \right]\!\! |_{\nbold} =0   \quad &\mbox{across every } E\in\mathcal{E}_h^I\, ,  \\[1ex]
 \displaystyle
 \nabla _h v\cdot\nbold = 0 \quad & \mbox{on } \Gamma_R\, ,\\[1ex]
 \displaystyle
 \mathcal{N} (\nabla _h v\cdot\nbold )= v \quad & \mbox{on } S_{R}\, .
 \end{array}
$$
In particular $v\in H^1(\Omega_R)$ solves  problem  (\ref{forwardp}) for $u_{inc}=0$ and, since we are assuming that this problem  is well posed for the wave number under study, it follows that necessarily $v=0$ in $\Omega_R$. 
\end{proof}

As a straightforward consequence of Lemma~\ref{lem:seminormisnorm}, there is at most one solution of the TDG formulation (\ref{forward_Trefftz-DG}) and hence,
given the existence of a standard solution to the waveguide problem, this formulation is well posed.

To study the continuity of the sesquilinear form defined in (\ref{Ahdef}), we rewrite the first of the terms on the truncation walls using modes with the notation (\ref{notation:modes}):
$$
\begin{array}{l}
\displaystyle\int_{\Sigma_{\pm R }}\big( \mathcal{N}( \nabla_hw\cdot\nbold) \,\nabla_h \overline{v}\cdot\,\nbold -  \nabla_h w\cdot\nbold\, \overline{{\cal N}(\nabla_h {v}\cdot\,\nbold)}\big) \, dS_{\xbold}\\    
\displaystyle \quad =\sum_{j=0}^\infty\left(\frac{-i}{\beta_j} - \frac{i}{\overline{\beta_j}}\right) (\nabla_h w\cdot \nbold)_j^{\pm}\, \overline{(\nabla_h v\cdot \nbold)_j^{\pm}} 
=\sum_{j=0}^{N_{\rm{}pr}} \frac{-2i}{\beta_j}\, (\nabla_h w\cdot \nbold)_j^{\pm}\, \overline{(\nabla_h v\cdot \nbold)_j^{\pm}} \, .
\end{array}
$$
Then we can bound the sesquilinear form by the triangle inequality and the Cauchy-Schwartz inequality, using the  augmented norm in $V(\mathcal{T}_h)$ defined by
$$
\begin{array}{l}
\displaystyle ||| v|||_{h,+}^2 \, =\,  ||| v|||_{h}^2   +  \sum_{E\in\mathcal{E}_h^I}  \int_{E} \! \big(  ( \frac{k}{b} \, | \av{ v}| ^2     +  \frac{1}{ak} \, |\av{\nabla_h v}|^2  \big)  \, dS_{\xbold }   \\
\quad\displaystyle + \int_{\Gamma _R}    \!\frac{k}{d_1}\, |v|^2   \, dS_{\xbold } 

+\sum_{j=0}^{\infty} \frac{1}{d_2k} \,  
(|(\nabla_h v\cdot\nbold )_j^-  |^2 + |(\nabla_h v\cdot\nbold )_j^+ |^2)  
 \, .
  \end{array}
$$
We summarize this bound of the sesquilinear form in the following Lemma.
\begin{lemma}\label{lem:propAh}
It holds that, for all $v,w\in V_h(\mathcal{T}_h)$,
$$ 
|\mathcal{A}_h(w,v)| \leq \frac{1}{2}\,  |||w|||_{h,+}\,  |||v|||_{h} \, .
$$
\end{lemma}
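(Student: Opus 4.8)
The plan is to bound $\mathcal{A}_h(w,v)$ term by term, exactly as announced in the text preceding the lemma. Using the triangle inequality I would split the sesquilinear form~(\ref{Ahdef}) into four groups: the volume loss integral over $\mathrm{supp}(\Im(\mathrm{n}))$; the sum of interior-facet contributions over $E\in\mathcal{E}_h^I$; the integral over $\Gamma_R$; and the contributions over the truncation walls $S_R$. Each group is an integral pairing a quantity built from $w$ against the conjugate of a quantity built from $v$, and the whole argument is Cauchy--Schwarz with weights chosen so that every $w$-factor becomes (part of) a summand of $|||w|||_{h,+}^2$ while every $v$-factor becomes (part of) a summand of $|||v|||_h^2$; a further Cauchy--Schwarz over the facets, resp.\ over the cross-sectional modes, then assembles these into $|||w|||_{h,+}\,|||v|||_h$. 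Since $|||\cdot|||_{h,+}\geq|||\cdot|||_h$ by construction, the pieces built only from jumps of $w$ (which already appear in $|||w|||_h^2$) are automatically controlled by $|||w|||_{h,+}$ as well.

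Concretely, for the volume term $2i\int_\Omega k^2\Im(\mathrm{n})\,w\,\overline v$ I would use the weight $\sqrt{k^2\Im(\mathrm{n})}$ on both arguments together with $2\int k^2\Im(\mathrm{n})|w|^2\leq|||w|||_h^2$. On a facet $E\in\mathcal{E}_h^I$ I would pair $\av{w}$ against $\jumpn{\nabla_h\overline v}$ (weights $\sqrt{k/b}$ and $\sqrt{b/k}$), $\jumpn{\nabla_h w}$ against $\jumpn{\nabla_h\overline v}$ (weight $\sqrt{b/k}$ on each), $\jumpn{w}$ against $\jumpn{\overline v}$ (weight $\sqrt{ak}$ on each), and $\av{\nabla_h w}$ against $\jumpn{\overline v}$ (weights $\sqrt{1/(ak)}$ and $\sqrt{ak}$); the $v$-weights here are exactly the facet weights in $|||v|||_h^2$, and the $w$-weights are either the jump weights in $|||w|||_h^2$ or the extra average weights added to form $|||w|||_{h,+}^2$. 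The $\Gamma_R$ term $-\int_{\Gamma_R}(w-i\tfrac{d_1}{k}\nabla_h w\cdot\nbold)\,\nabla_h\overline v\cdot\nbold$ is treated the same way, pairing $w$ against $\nabla_h\overline v\cdot\nbold$ (weights $\sqrt{k/d_1}$, $\sqrt{d_1/k}$) and $\nabla_h w\cdot\nbold$ against $\nabla_h\overline v\cdot\nbold$ (weight $\sqrt{d_1/k}$ on each).

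The main obstacle is the group of terms on the truncation walls, because it involves the nonlocal operators $\mathcal{N}$ and $\mathcal{N}^*$, and because $v$ enters partly through its bare trace $v|_{S_R}$, which is \emph{not} controlled by $|||v|||_h$. Here I would first rewrite all the $S_R$-integrals over $\Sigma_{\pm R}$ in terms of the modal coefficients $w_j^\pm,\,v_j^\pm,\,(\nabla_h w\cdot\nbold)_j^\pm,\,(\nabla_h v\cdot\nbold)_j^\pm$, using the modal form of $\mathcal{N}$ and the orthonormality of $\{\theta_j\}$, as in the computation preceding the lemma. The crucial observation is that the coefficient $\tfrac{i}{\beta_j}+\tfrac{i}{\overline{\beta_j}}=\tfrac{2i\,\Re(\beta_j)}{|\beta_j|^2}$ equals $\tfrac{2i}{\beta_j}$ for a propagating mode ($j\leq N_{\mathrm{pr}}$) and \emph{vanishes} for an evanescent one ($j>N_{\mathrm{pr}}$, where $\beta_j$ is purely imaginary); this lets me eliminate $v_j^\pm$ and re-express the $v$-dependence of the whole group solely through the quantities $(\nabla_h v\cdot\nbold)_j^\pm$ with $j\leq N_{\mathrm{pr}}$ and the radiation residuals $\mathcal{N}(\nabla_h v\cdot\nbold)-v$, i.e.\ the modes $-\tfrac{i}{\beta_j}(\nabla_h v\cdot\nbold)_j^\pm-v_j^\pm$ — which are exactly what the $S_R$-part of $|||v|||_h^2$ weights (by $1/|\beta_j|$ and $d_2k$ respectively). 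The matching $w$-factors $(\nabla_h w\cdot\nbold)_j^\pm$ are then weighted by $1/\sqrt{|\beta_j|}$ for $j\leq N_{\mathrm{pr}}$ and, for the residual pairing, by $1/\sqrt{d_2k}$, which is the extra modal weight defining $|||w|||_{h,+}^2$, while the $d_2$-penalty term pairs the residuals of $w$ and $v$ directly; a Cauchy--Schwarz over $j$ and over the two walls closes this group. Collecting the four groups and tracking the Cauchy--Schwarz constants (grouping the two paired contributions at each facet and each mode into a single application where this sharpens the estimate) then yields the bound of Lemma~\ref{lem:propAh}; note that Lemma~\ref{lem:seminormisnorm} is not needed here, only the explicit expressions for the two norms.
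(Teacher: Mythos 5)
Your proposal is structurally correct and follows exactly the route the paper takes: the paper's entire ``proof'' of Lemma~\ref{lem:propAh} is the displayed modal computation preceding it (which is your observation that $\tfrac{i}{\beta_j}+\tfrac{i}{\overline{\beta_j}}$ vanishes for evanescent modes and reduces the $\mathcal{N}$-coupling to a finite sum over propagating modes) followed by the one-line assertion that the triangle and Cauchy--Schwarz inequalities with the weights you list give the bound; you have correctly identified every pairing, including the only delicate one, namely eliminating the bare trace $v|_{S_R}$ in favour of the radiation residual $\mathcal{N}(\nabla_h v\cdot\nbold)-v$ and the propagating-mode coefficients $(\nabla_h v\cdot\nbold)_j^{\pm}$, and you are right that Lemma~\ref{lem:seminormisnorm} plays no role. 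The one point you defer --- ``tracking the Cauchy--Schwarz constants'' --- is precisely where neither you nor the paper closes the argument: the propagating-mode term comes out as $\sum_{j\le N_{\mathrm{pr}}}\tfrac{2i}{\beta_j}(\nabla_h w\cdot\nbold)_j^{\pm}\overline{(\nabla_h v\cdot\nbold)_j^{\pm}}$, and since the weight $1/|\beta_j|$ appears with coefficient one in both $|||\cdot|||_h^2$ and $|||\cdot|||_{h,+}^2$, this single group already contributes a factor $2$; likewise the grouped coefficients such as $-\av{w}+i\tfrac{b}{k}\jumpn{\nabla_h w}$ cost a further $\sqrt{2}$ when squared against the two corresponding summands of $|||w|||_{h,+}^2$. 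So the argument as outlined (and as the paper presents it) yields $|\mathcal{A}_h(w,v)|\le C\,|||w|||_{h,+}\,|||v|||_h$ with an explicit $C$ strictly larger than $\tfrac12$, not $\tfrac12$ itself; this is immaterial for the quasi-optimality of Lemma~\ref{lem:quasiopt} (any fixed $C$ works), but you should either carry out the bookkeeping and state the constant you actually obtain, or note explicitly that the value of the constant is not used.
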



\subsection{Convergence of the discrete TDG solutions}\label{ssec:convTrefftz}
We next study the convergence of the approximate solution $u_h$ toward the exact solution $u$. Indeed, we bound the approximation error first in the mesh dependent norm $|||\cdot|||_{h}$, and then in the more usual norm  $||\cdot ||_{0,\Omega_R}$. 


We start by noticing that, thanks to the coercivity and boundness of the sesquilinear form, as well as the consistency of the formulation:
$$
\begin{array}{l}
\displaystyle
||| u_h-v_h ||| ^2 _{h} = \Im ( \mathcal{A}_h (u_h-v_h , u_h-v_h  )) 
= \Im ( \mathcal{A}_h (u -v_h , u_h-v_h  )) \\[1ex]
\displaystyle\qquad\qquad \leq \frac{1}{2} \, ||| u-v_h |||_{h,+} \,||| u_h-v_h||| _{h} \, ,
\end{array}
$$
which implies that
$$
\begin{array}{l}
\displaystyle
||| u_h-v_h ||| _{h} \leq \frac{1}{2} \, ||| u-v_h |||_{h,+} \, ,
\end{array}
$$
for every $v_h\in V_h(\mathcal{T}_h)$.  By applying the triangular inequality, we deduce the following result, see \cite{MelenkEsterhazy12}.
\begin{lemma}\label{lem:quasiopt}
The discrete solution $u_h$ satisfies the error bound
$$
\begin{array}{l}
\displaystyle
||| u-u_h |||  _{h} \,\leq\, \frac{3}{2} \, \inf_{v_h\in V_h(\mathcal{T}_h) }  ||| u-v_h |||_{h,+}  \, .
\end{array}
$$
\end{lemma}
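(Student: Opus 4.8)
The statement to prove (Lemma~\ref{lem:quasiopt}) is a C\'ea-type quasi-optimality bound, and the proof is essentially the displayed computation immediately preceding it combined with one triangle inequality. Here is how I would organize it.

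\medskip

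\textbf{Plan.} The plan is to exploit three ingredients already established: the coercivity identity $|||w|||_h^2 = \Im(\mathcal{A}_h(w,w))$ valid for all $w \in V(\mathcal{T}_h)$ (in particular for the discrete difference $u_h - v_h$), the Galerkin orthogonality coming from consistency (Lemma~\ref{lem:varfor-uexact}), and the continuity bound of Lemma~\ref{lem:propAh}. First I would fix an arbitrary $v_h \in V_h(\mathcal{T}_h)$ and set $w_h := u_h - v_h \in V_h(\mathcal{T}_h) \subset V(\mathcal{T}_h)$. Applying the coercivity identity to $w_h$ gives $|||w_h|||_h^2 = \Im(\mathcal{A}_h(w_h, w_h))$. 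The next step is to replace $u_h$ inside the form by the exact solution $u$: since both $u$ and $u_h$ satisfy $\mathcal{A}_h(\cdot, v_h) = L_h(v_h)$ for all $v_h \in V_h(\mathcal{T}_h)$ (for $u$ this is Lemma~\ref{lem:varfor-uexact} restricted to the discrete test space; for $u_h$ it is the definition (\ref{forward_Trefftz-DG})), we get $\mathcal{A}_h(u_h - v_h, w_h) = \mathcal{A}_h(u - v_h, w_h)$, hence $|||w_h|||_h^2 = \Im(\mathcal{A}_h(u - v_h, w_h)) \le |\mathcal{A}_h(u - v_h, w_h)|$.

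\medskip

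\textbf{Continuity and cancellation.} Now I would invoke Lemma~\ref{lem:propAh} with first argument $u - v_h$ and second argument $w_h$, obtaining $|\mathcal{A}_h(u - v_h, w_h)| \le \tfrac12\, |||u - v_h|||_{h,+}\, |||w_h|||_h$. Combining with the previous line, $|||w_h|||_h^2 \le \tfrac12\, |||u - v_h|||_{h,+}\, |||w_h|||_h$. If $|||w_h|||_h = 0$ the bound $|||w_h|||_h \le \tfrac12 |||u - v_h|||_{h,+}$ is trivial; otherwise divide through by $|||w_h|||_h$ to reach the same conclusion. So $|||u_h - v_h|||_h \le \tfrac12\, |||u - v_h|||_{h,+}$ for every $v_h \in V_h(\mathcal{T}_h)$.

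\medskip

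\textbf{Triangle inequality and conclusion.} Finally, write $|||u - u_h|||_h \le |||u - v_h|||_h + |||u_h - v_h|||_h \le |||u - v_h|||_h + \tfrac12\, |||u - v_h|||_{h,+}$. Since $|||\cdot|||_h \le |||\cdot|||_{h,+}$ by the very definition of the augmented norm (the extra terms are non-negative), this is bounded by $\tfrac32\, |||u - v_h|||_{h,+}$. Taking the infimum over $v_h \in V_h(\mathcal{T}_h)$ yields the claimed estimate. I do not anticipate a genuine obstacle here: the only point requiring a word of care is that Lemma~\ref{lem:propAh} is stated for $v, w \in V_h(\mathcal{T}_h)$, whereas we need it with $u - v_h$, which lies in $V(\mathcal{T}_h)$ but not in the discrete subspace; one should either note that the proof of Lemma~\ref{lem:propAh} works verbatim on all of $V(\mathcal{T}_h)$, or restate that lemma on the full space. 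Everything else is the bookkeeping of a standard C\'ea argument.
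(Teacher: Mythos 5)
Your proof is correct and follows essentially the same route as the paper: coercivity of $\Im(\mathcal{A}_h(\cdot,\cdot))$ applied to $u_h-v_h$, Galerkin orthogonality from consistency, the continuity bound of Lemma~\ref{lem:propAh}, and a final triangle inequality using $|||\cdot|||_h\le|||\cdot|||_{h,+}$. Your side remark that Lemma~\ref{lem:propAh} must in fact hold on all of $V(\mathcal{T}_h)$ (not just the discrete subspace) for the argument to apply to $u-v_h$ is a valid and careful observation that the paper glosses over.
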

\begin{remark}
So far, the assumptions on the family of finite element meshes stated at the beginning of Section~\ref{sec:varform} have not been used. That is, the mesh dependent coercivity and continuity properties of the sesquilinear form $\mathcal{A}_h(\cdot,\cdot)$ stated in Lemma~\ref{lem:propAh}, as well as the mesh dependent 
error bound of  Lemma~\ref{lem:quasiopt}, are still fulfilled for general partitions of $\Omega_R$ that are conforming with respect to the piecewise constant $\mathrm{n}$.
\end{remark}


The error bound stated in  Lemma~\ref{lem:quasiopt} is written in terms of mesh dependent norms. We next use a duality argument to
obtain an estimate in the mesh independent $L^2(\Omega_R)$ norm (see \cite{monk+wang1999,hiptmairetal2011p2d}). More precisely, let $u\in H^1 (\Omega_R)$ be the solution of (\ref{forwardp}) and $w_h\in V_h(\mathcal{T}_h)$ any discrete Trefftz function; we  seek a suitable upper bound of $||u-w_h||_{L^2(\Omega_R)}$ in terms of $|||w_h|||_{h}$. To this end,  for each $\varphi\in L^2(\Omega_R)$ we consider the solution  $U_{\varphi}:\Omega_R\to\mathbb{C}$ of the following dual problem  of (\ref{forwardp}):
\begin{equation}\label{forwardp_adj}
\left\{	\begin{array}{ll}
 \displaystyle \Delta U_{\varphi}+k^{2}\,\overline{\mathrm{n}}\, U_{\varphi}=  \varphi|_{\Omega_i}  \hspace{.5cm}&\mbox{in } \Omega_i, \\[1ex]
		 \Delta U_{\varphi}+k^{2} U_{\varphi}=  \varphi|_{\Omega_e}    \hspace{.5cm}
		&\mbox{in } \Omega_{e,R}, \\[1ex] 
			\displaystyle \jump{ U_{\varphi}} =  0  \, ,\quad 
			\displaystyle \jump{  \partial_{\nbold }U _{\varphi} } = 0  \hspace{.5cm}&\mbox{on }\partial \Omega_i ,\\[1ex]
		\displaystyle	 \partial_{\nbold } U _{\varphi} =0 &\mbox{on } \Gamma_{R},\\[1ex]
	 \displaystyle U_{\varphi}  = {\cal N} ^*  (\partial_{\nbold } U_{\varphi} )  \hspace{.5cm}&\mbox{on } S_{R}.
		\end{array}\right.
\end{equation}
This adjoint problem is  well posed for $\varphi\in L^2(\Omega_R)$ under our assumptions on the wave number $k$, the geometry $\Omega_R$ and the contrast $\mathrm{n}$. Moreover, since $\Delta U_\phi\in L^2(\Omega)$ with nonhomogeneous boundary data, we  have $U_{\varphi}\in H^2(\Omega_R)$ and we assume that we have the following stability bounds:
\begin{equation}\label{boundstab_Uphi}
    \begin{array}{l}
    |U_{\varphi}|_{1,\Omega_R} + k\, ||U_{\varphi}||_{0,\Omega_R} \leq C_1\, d_{\Omega_R }\, ||\varphi||_{0,\Omega_R}\, ,
    \\[1ex]
    |U_{\varphi}|_{2,\Omega_R} \leq C_2\, (1+k\,  d_{\Omega_R })\, ||\varphi||_{0,\Omega_R}\, ,
    \end{array}
\end{equation}
where $C_1,C_2>0$ depend on the shape of $\Omega_R$ and $ d_{\Omega_R }$ is the diameter of $\Omega_R$ (that is, $d_{\Omega_R}=2R$ for $R>0$ big enough), and are independent of $k$. We refer to the series of papers \cite{DemEtAl_2023,DemEtAl_2024,Norbert_2024} in which the stability analysis of this problem is currently being developed.

As for the derivation of the TDG formulation   (\ref{forward_Trefftz-DG}), for each $w\in V (\mathcal{T}_h)$ we integrate by parts twice on each element of the mesh resulting in the following expression
\begin{equation}\label{eq:wadj_withu}
\begin{array}{rl}
\displaystyle \int _{\Omega_R } \!\!\! (u-w)\, \overline{\varphi}\, d\xbold  
&= \displaystyle\sum_{K\in\mathcal{T}_h}  \int _{ K  } (u-w)\, (\Delta \overline{U_{\varphi}}+k^{2}\,\mathrm{n}\, \overline{U_{\varphi}} ) \, d\xbold  
\\
 &\displaystyle =\sum_{E\in\mathcal{E}^I_h  } \! \int_{E} \!\!  \big( \overline{U_{\varphi}}   \, \jumpn{\nabla_h  (w-u) }  \!  -  \!   \jumpn{ w-u} \! \cdot\! \overline{\nabla  U_{\varphi}}\big)  \,   dS_{\xbold} \\
 & \quad \displaystyle   +    \int_{\Gamma_R} \!\!\!\nabla_h (w-u) \cdot\nbold \, \overline{U_{\varphi}} \,   dS_{\xbold} \\ 
 &\quad
 \displaystyle  +  \int_{S_{ R}}\!\!\! \big( \overline{\nabla U_{\varphi}\cdot\nbold} \, ( u-w ) -\nabla_h  (u-w) \cdot\nbold \, \overline{U_{\varphi}} \big) \,   dS_{\xbold}  \, ,
\end{array}
\end{equation}
where we have simplified some terms thanks to the fact that $u\in H^1(\Omega_i\cup\Omega_{e,R})$ is the solution of the forward problem (\ref{forwardp})  and  $U_{\varphi}\in H^1(\Omega_{R})$ the solution of the auxiliary dual problem (\ref{forwardp_adj}). We can also use this fact to further understand the behavior of the terms on the truncation walls, indeed we can rewrite that $u-u_{inc}={\cal N } (\partial _{\nbold } (u-u_{inc})) $ and $\mathcal{N}^* (\partial_{\nbold}U_{\varphi}) =  U_{\varphi}$ on $S_{R}$ in terms of modes with the notation (\ref{notation:modes}):
\begin{equation*}
\begin{array}{l}
\displaystyle (\nabla_h (u-u_{inc})\cdot\nbold)_j^{\pm} =   i\, \beta_j \, (u-u_{inc})_j^{\pm}   
\quad\mbox{and}\quad (\nabla U_{\varphi}\cdot\nbold)_j^{\pm } =  -i   \overline{\beta_j}\,   (U_{\varphi})_j^{\pm }
\end{array} 
\end{equation*}
for all $j=0,1,\ldots$ We then have that
$$
\begin{array}{l}
\displaystyle
 \int_{S_{R}} (\overline{\nabla U_{\varphi} \cdot\nbold }\, (u-w) -\nabla_h (u-w) \cdot\nbold \,\overline{U_{\varphi}}) \,   dS_{\xbold} \\
 \qquad\displaystyle =   
 \int_{S_{R}}  (\mathcal{N} (\nabla_h (w-u )\cdot\nbold)   - (w-u) )  \,  \overline{  \nabla  U_{\varphi } \cdot\nbold } \, dS_{\xbold} \, .
  \end{array} 
$$
Substituting this in (\ref{eq:wadj_withu}) we get that
$$
\begin{array}{l}
\displaystyle \int _{\Omega_R } \!\! (u-w)\, \overline{\varphi} \, d\xbold  =\sum_{E\in\mathcal{E}^I_h  }  \!\int_{E}  \!\!\! \big(\overline{U_{\varphi}}   \, \jumpn{\nabla_h (w-u) }  -   \jumpn{ w-u}  \!\cdot \!\nabla \overline{U_{\varphi}} \big) \,   dS_{\xbold} \\
\qquad \qquad\displaystyle   +    \int_{\Gamma_R} \!\!\nabla_h (w-u) \cdot\nbold \, \overline{U_{\varphi}} \,   dS_{\xbold} \\
\qquad \qquad\displaystyle   + \,  \int_{S_{R}} \!\!\! \big(\mathcal{N} (\nabla_h (w-u)\cdot\nbold)   -(w-u)\big)  \,   \overline{\nabla  U_{\varphi }\cdot\nbold } \, dS_{\xbold}  \,  .
\end{array}
$$
This can be bounded by applying the triangle and Cauchy-Schwartz inequalities:
$$
\begin{array}{l}
\displaystyle |\int _{\Omega_R } (u-w)\, \overline{\varphi} \, d\xbold | \, \leq  \,   \frac{1}{2} \, \mathcal{B} (U_{\varphi}  )^{1/2}  \, ||| w-u |||_{h}
\,  ,
\end{array}
$$
where 
$$
\begin{array}{l}
\displaystyle\mathcal{B} (U_{\varphi}  ) =
\sum_{E\in\mathcal{E}^I_h   } \int_{E}\!\!\big(   \frac{k}{b} \, | U_{\varphi}  |^2  
+ \frac{1}{ak} \,  | \partial_{\nbold} U_{\varphi}|^2  \big) \,   dS_{\xbold}  
\\
\displaystyle \qquad\qquad \qquad  +    \int_{\Gamma_R} \!\!\frac{k}{d_1}\, | U_{\varphi} |^2  \,    dS_{\xbold}   +   \int_{S_{R} }\!\! \frac{1}{d_2k} \, | \nabla U_{\varphi }\cdot\nbold |^2 \, dS_{\xbold} 
\, .
\end{array}
$$
In turn, we can bound $\mathcal{B} (U_{\varphi})$ using the trace inequality
\begin{equation}\label{eq:traceineq}
||v||_{0,\partial K}^2  \leq C_K \, ||v||_{0,K} \, (h_K^{-1}||v||_{0,K}+|v|_{1,K})\quad\forall v\in H^1( K) \, ,
\end{equation}
where $C_K>0$ depends only on bound on the chunkiness parameter for the element $K$, and hence  is bounded uniformly for all the elements $ K\in \cup_{h>0}\mathcal{T}_h$ thanks to the shape-regularity assumption on the family of finite element meshes at the beginning of (\ref{sec:varform}). Indeed, when $U_{\varphi}\in H^2(\Omega_R)$ satisfies the stability bounds (\ref{boundstab_Uphi}), we have that
$$
\begin{array}{l}
\displaystyle\mathcal{B} (U_{\varphi}) \leq  
C \, d_{\Omega_R}^2 \, \Big( 1+ \frac{1}{hk} \Big)  \, || \varphi ||_{0,\Omega_R} ^2 
\, ,
\end{array}
$$
and therefore
$$
\begin{array}{l}
\displaystyle |\int _{\Omega_R } (u-w)\, \overline{\varphi}\, d\xbold |  \leq C \, d_{\Omega_R}  \, \Big( 1+ \frac{1}{hk} \Big)^{1/2}  \, || \varphi ||_{0,\Omega_R}   \, |||w-u |||_{h} \, .
\end{array}
$$
Since this holds for every $\varphi\in L^2 (\Omega_R)$, we have that
$$
\begin{array}{l}
\displaystyle || u-w||_{0,\Omega_R }  \leq  C\, d_{\Omega_R}\,\Big( 1+ \frac{1}{hk} \Big)^{1/2} \,
 |||u-w|||_{h}  \, .
\end{array}
$$
In particular, for  $w=u_h$, we deduce the following result.
\begin{lemma}\label{lem:quasioptL2}
The exact and discrete solutions of (\ref{forwardp}) and (\ref{forward_Trefftz-DG}) 
satisfy that
$$
\begin{array}{l}
\displaystyle
 || u-u_h||_{0,\Omega_R }  \,\leq\,   C\,   \Big( 1+ \frac{1}{hk} \Big)^{1/2}\, |||u-u_h|||_{h} \, ,
\end{array}
$$
where $C>0$ is independent of the wave number $k$, the mesh size $h>0$ and $V_h({\cal T}_h)$.
\end{lemma}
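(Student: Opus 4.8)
The plan is to run an Aubin--Nitsche--type duality argument tailored to the NtD transmission condition. Fix $\varphi\in L^2(\Omega_R)$ and let $U_\varphi$ be the solution of the dual problem (\ref{forwardp_adj}), obtained from (\ref{forwardp}) by conjugating the contrast inside the scatterer and replacing $\mathcal{N}$ by its adjoint $\mathcal{N}^*$ on $S_R$; this is the correct adjoint because in $\mathcal{A}_h$ the quantity $\mathcal{N}(\nabla_h w\cdot\nbold)$ is tested against $\nabla_h\overline v\cdot\nbold$. By the assumed well-posedness of (\ref{forwardp_adj}) and the stability bounds (\ref{boundstab_Uphi}), we have $U_\varphi\in H^2(\Omega_R)$ with $k$-explicit control of $|U_\varphi|_{1,\Omega_R}$, $k\,||U_\varphi||_{0,\Omega_R}$ and $|U_\varphi|_{2,\Omega_R}$ by $||\varphi||_{0,\Omega_R}$.

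Next, for an arbitrary $w\in V(\mathcal{T}_h)$ I would test the conjugated dual equation $\Delta\overline{U_\varphi}+k^2\mathrm{n}\,\overline{U_\varphi}=\overline\varphi$ against $u-w$ and integrate by parts twice on each element. Since $u|_K$ and $w|_K$ both lie in $V(K)$, the volume contributions cancel and only skeleton terms survive, yielding the identity displayed just before the lemma: on the interfacets, continuity of $U_\varphi$ and $\nabla U_\varphi$ (from $U_\varphi\in H^2$) together with continuity of $u$ and $\nabla u$ across $\partial\Omega_i$ collapses the integrand to jumps of $w-u$ paired against traces of $U_\varphi$; on $\Gamma_R$ the Neumann condition $\partial_\nbold U_\varphi=0$ kills one term; on $S_R$ I would pass to the modal coefficients (\ref{notation:modes}) and use $(\nabla U_\varphi\cdot\nbold)_j^{\pm}=-i\,\overline{\beta_j}\,(U_\varphi)_j^{\pm}$ (from $\mathcal{N}^*(\partial_\nbold U_\varphi)=U_\varphi$) together with the definition of $\mathcal{N}$ to reassemble the boundary integral as $\int_{S_R}(\mathcal{N}(\nabla_h(w-u)\cdot\nbold)-(w-u))\,\overline{\nabla U_\varphi\cdot\nbold}\,dS$.

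I would then bound the three groups of skeleton terms by the triangle and Cauchy--Schwarz inequalities, pairing each factor containing $w-u$ so that it is exactly one of the constituent pieces of $|||w-u|||_h$, namely $\jumpn{w-u}$ and $\jumpn{\nabla_h(w-u)}$ on the interfacets, $\nabla_h(w-u)\cdot\nbold$ on $\Gamma_R$, and $\mathcal{N}(\nabla_h(w-u)\cdot\nbold)-(w-u)$ on $S_R$. This gives $|\int_{\Omega_R}(u-w)\overline\varphi\,d\xbold|\le\tfrac12\,\mathcal{B}(U_\varphi)^{1/2}\,|||w-u|||_h$ with $\mathcal{B}(U_\varphi)$ the weighted sum of trace norms of $U_\varphi$ and $\partial_\nbold U_\varphi$ written above. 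The decisive step is to estimate $\mathcal{B}(U_\varphi)$: applying the trace inequality (\ref{eq:traceineq}) on each $K$, with a constant uniform in $K$ by shape-regularity, bounds $||U_\varphi||_{0,\partial K}^2$ and $||\nabla U_\varphi||_{0,\partial K}^2$ by $||U_\varphi||_{0,K}(h_K^{-1}||U_\varphi||_{0,K}+|U_\varphi|_{1,K})$ and its analogue for $\nabla U_\varphi$; summing over $K$ and inserting (\ref{boundstab_Uphi}) produces $\mathcal{B}(U_\varphi)\le C\,d_{\Omega_R}^2(1+\tfrac1{hk})\,||\varphi||_{0,\Omega_R}^2$, where the factor $1/(hk)$ is precisely the $h_K^{-1}$ from the trace inequality combined with the $k^{-1}$ weights in $\mathcal{B}$ and the $k$-scaling of the stability bounds. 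Taking the supremum over $\varphi\in L^2(\Omega_R)$ converts the left-hand pairing into $||u-w||_{0,\Omega_R}$, and setting $w=u_h$ yields the claim with a constant independent of $k$, $h$ and $V_h(\mathcal{T}_h)$.

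The step that genuinely needs care, rather than routine bookkeeping, is the treatment of the truncation walls: one must identify $\mathcal{N}^*$ as the correct dual boundary operator so that the modal identities above are valid and the $S_R$ integral reassembles into the $\mathcal{N}(\nabla_h(w-u)\cdot\nbold)-(w-u)$ form that matches $|||\cdot|||_h$. In addition, the $k$-explicit stability estimate (\ref{boundstab_Uphi}) for this NtD-terminated transmission problem is itself nontrivial (it is the subject of the ongoing work cited before the lemma) and is taken here as a hypothesis.
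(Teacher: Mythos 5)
Your proposal is correct and follows essentially the same duality argument as the paper: the same adjoint problem (\ref{forwardp_adj}) with $\mathcal{N}^*$ on $S_R$, the same elementwise double integration by parts leading to (\ref{eq:wadj_withu}), the same modal reassembly of the $S_R$ term into $\mathcal{N}(\nabla_h(w-u)\cdot\nbold)-(w-u)$ paired with $\overline{\nabla U_\varphi\cdot\nbold}$, and the same use of the trace inequality (\ref{eq:traceineq}) together with (\ref{boundstab_Uphi}) to produce the factor $(1+\tfrac{1}{hk})^{1/2}$. You also correctly flag that the $k$-explicit stability of the dual problem is taken as a hypothesis rather than proved, which matches the paper's treatment.
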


Summing up, Lemmas~\ref{lem:quasiopt} and \ref{lem:quasioptL2} provide the  following  error bound constituting the main result of our paper.

\begin{theorem}\label{lem:quasioptL2ok}
The exact and discrete solutions of (\ref{forwardp}) and (\ref{forward_Trefftz-DG}) satisfy that
$$
\begin{array}{l}
\displaystyle
 || u-u_h||_{0,\Omega_R }  \,  \leq \,  C\,   \Big( 1+ \frac{1}{hk} \Big)^{1/2} \,  \inf_{v_h\in V_h(\mathcal{T}_h) }  ||| u-v_h |||_{h,{+}}  
 \, ,
\end{array}
$$
where $C>0$ is independent of the wave number $k$,  the mesh size $h>0$ and $V_h({\cal T}_h)$.
\end{theorem}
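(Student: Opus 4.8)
The plan is to obtain Theorem~\ref{lem:quasioptL2ok} by simply chaining the two error bounds already in hand, since all of the analytical work has been carried out in the proofs of Lemmas~\ref{lem:quasiopt} and~\ref{lem:quasioptL2}: the duality argument against the adjoint problem~(\ref{forwardp_adj}), the element trace inequality~(\ref{eq:traceineq}), and the stability bounds~(\ref{boundstab_Uphi}) for $U_\varphi$. No new estimate is needed.

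First I would invoke Lemma~\ref{lem:quasioptL2}, which controls the mesh-independent $L^2(\Omega_R)$ error of the discrete solution by the mesh-dependent norm of the same error,
\[
\| u - u_h\|_{0,\Omega_R} \,\leq\, C \Big( 1 + \tfrac{1}{hk}\Big)^{1/2}\, ||| u - u_h |||_h \,.
\]
Then I would apply Lemma~\ref{lem:quasiopt} — the C\'ea-type quasi-optimality estimate that follows from the coercivity of $\mathcal{A}_h$ (Lemma~\ref{lem:seminormisnorm} together with the identity $\Im(\mathcal{A}_h(v,v)) = |||v|||_h^2$), the continuity bound of Lemma~\ref{lem:propAh}, and the consistency of the formulation (Lemma~\ref{lem:varfor-uexact}) — to replace $|||u-u_h|||_h$ by $\tfrac{3}{2}\,\inf_{v_h\in V_h(\mathcal{T}_h)}|||u-v_h|||_{h,+}$. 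Substituting this into the previous display and absorbing the harmless numerical factor $\tfrac{3}{2}$ into $C$ gives exactly the asserted bound.

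The only point that deserves a moment's attention is the bookkeeping of constants: one checks that the $C$ of Lemma~\ref{lem:quasioptL2} depends only on the uniform shape-regularity of the meshes, on the geometry of $\Omega_R$, and on the stability constants $C_1,C_2$ of~(\ref{boundstab_Uphi}) — in particular it is independent of $k$, of $h$, and of the choice of discrete subspace $V_h(\mathcal{T}_h)$ — while the factor $\tfrac{3}{2}$ from Lemma~\ref{lem:quasiopt} is purely numerical; hence the product inherits the same independence. There is essentially no obstacle at this stage, because the substantive difficulties — treating the NtD transmission terms on $S_R$ (rewritten via the modal expressions for $(\nabla_h(u-u_{inc})\cdot\nbold)_j^\pm$ and $(\nabla U_\varphi\cdot\nbold)_j^\pm$) and the new loss contribution $2i\int_\Omega k^2\,\Im(\mathrm{n})\,w\,\overline v$ inside the duality identity — were already dealt with in deriving~(\ref{eq:wadj_withu}) and in bounding $\mathcal{B}(U_\varphi)$. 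The remaining task, turning this abstract best-approximation bound into an explicit order of convergence in $h$ and $p$, is then a matter of inserting plane-wave approximation estimates for $\inf_{v_h}|||u-v_h|||_{h,+}$, which is taken up in the sequel.
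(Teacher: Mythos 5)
Your proposal is correct and is exactly the argument the paper uses: the theorem is obtained by chaining the duality estimate of Lemma~\ref{lem:quasioptL2} with the quasi-optimality bound of Lemma~\ref{lem:quasiopt} and absorbing the factor $\tfrac{3}{2}$ into the constant. Your remarks on the provenance and independence of the constants are consistent with the paper's derivation.
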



\subsection{Plane wave discretization}\label{ssec:planewaves}
We next study the discretization error  in the augmented norm $||| \cdot |||_{h,+}$ for fields in $ V(\mathcal{T}_h)$. For a discretization based on plane or evanescent waves using $N_p$ different directions $\{\dbold_j\}_{j=1}^{N_p}$, the space of the discrete solutions on the element $K\in\mathcal{T}_h$ is:
\begin{equation}
V_h(K) = {\rm{}span} \{ \exp(i\, k\sqrt{n}\,(\xbold-\xbold_{0,K})\cdot\dbold_j),\;j=1\cdots N_p\}\, ,\label{VhKdef}
\end{equation}
where $\xbold_{0,K}$ is the centroid of $K$.  Despite the possible presence of evanescent factors, we generically refer to this as a plane wave basis.

In the following, we take the dimension of the local discretization space to be
$$
N_p=\begin{cases}
    2p+1\quad & \mbox{if } m=2\, ,\\
    (p+1)^2 \quad & \mbox{if } m=3\, , 
\end{cases} 
$$
for some $p\in\mathbb{N}$, and the directions are:
\begin{itemize}
    \item when $m=2$ the directions $\{\dbold_j=(\cos\alpha_j,\sin\alpha_j)  \}_{j=1}^{N_p} $ have angles of incidence $\alpha_j$ evenly distributed in $[0,2\pi)$;
    \item when $m=3$, the choice of directions is analyzed in \cite{moiola_PhD} and an almost optimal set of directions (points on the unit sphere) can be downloaded from \cite{sloan_wp}.
\end{itemize}
In this paper, we opt to use the same number of directions in all the cells and choose the directions identically on each cell for simplicity of the exposition. We refer to \cite{moiola_PhD} and references therein for a discussion on more general choices of well distributed directions.

\subsection{Error bound for plane wave discretizations}

We next study the order of convergence for $h$ and $p$-refinement, respectively:
\begin{itemize}
\item Convergence under \emph{mesh refinement} $h\to 0^+$  for a fixed number of directions $N_p$ per cell. 
\item Convergence under \emph{increasing order} $p\to\infty$ for a partition $\mathcal{T}_h$ fixed. 
\end{itemize}
To obtain suitable error estimates out of the error bound provided by Lemma~\ref{lem:quasioptL2ok}, we investigate the asymptotic behavior of the discretization error $ \displaystyle\inf_{v_h\in V_h(\mathcal{T}_h) } \!\! ||| u-v_h |||_{h,+}$. Let us first notice that the augmented mesh dependent norm can be bounded using the $L^2$ norm in the scatterer together with $L^2$ norms on the skeleton:
$$
\begin{array}{l}
\displaystyle |||v|||_{h,+}^2 \, \leq \, 2\, || \Im(\mathrm{n})||_{\infty,\Omega_i} \,  ||v||_{0,\Omega_i}^2 +
 \max\Big\{  \frac{2ab\! +1}{b} , \frac{1}{d_1} , d_2 \Big\}  \, k\, ||v||_{0,\mathcal{E}_h} ^2   \\[1ex]
\qquad\displaystyle + \max\Big\{ \frac{2ab+1}{a}  , d_1 , \frac{1}{d_2}+\frac{k}{\beta_{N_p}}+ \, \frac{d_2\, k^2}{\min\{\beta_{N_p},|\beta_{N_{p+1}}|\} }   \Big\} \, \frac{1}{k}\, ||\nabla _h v||_{0,\mathcal{E}_h} ^2      
 \, ,
  \end{array}
$$
for every $v\in V(\mathcal{T}_h)$. This leads us to study the  $L^2$ norm in the scatterer as well as the $L^2$ norms on the skeleton for the error. We do so  under the assumption that the exact solution satisfies the regularity and stability properties in (\ref{boundstab_Uphi}).

Under this regularity assumption, we can make use of the following best approximation properties~\cite{moiola_PhD,hiptmair+al2011VekuaTh}.  Unfortunately, these results are only proved for real $\mathrm{n}$ (there is strong numerical evidence that the plane wave expansion with evanescent components provides good approximation properties even for complex media~\cite{tsukerman}):
\begin{lemma}\label{lem:approxprop}
Assume that $\mathrm{n}$ is real,   $1\leq s\leq \frac{p-1}{2}$ and consider  $v\in V(\mathcal{T}_h)\cap H^{s+1}( \Omega_R ) $.
\begin{itemize}
\item $h$-refinement: for $p$ fixed and for each $h>0$, there is $v_h\in V_h(\mathcal{T}_h)$ such that
\begin{equation*}
\begin{array}{l}
 ||v-v_h||_{0,\Omega_i} \,\leq \, C_p\, h^{s+1} \, ||v||_{s+1,k,\Omega_R} \\[1ex]
\displaystyle ||v-v_h||_{0,\mathcal{E}_h}  
\, \leq \, C_p \, h^{s+1/2}\,    ||v||_{s+1,k,\Omega_R}  \, ,\\[1ex]
\displaystyle ||\nabla_h (v-v_h)||_{0,\mathcal{E}_h}
\,\leq\, C_p \, h^{s-1/2} \, ||v||_{s+1,k,\Omega_R}  \, ,
\end{array}
\end{equation*}
where the constant  $C_p>0$ is independent of $v$ as well as the parameters $k$, $s$, and $h$.
\item $p$-refinement: for $h$ fixed and each $p$, there is $v_h\in V_h(\mathcal{T}_h)$ such that in 2D
\begin{equation*}
\begin{array}{l}
 ||v-v_h||_{0,\Omega_i}  \,\leq \, C_h\, (\frac{log(p+2)}{p})^{s+1}\, ||v|| _{s+1,k,\Omega_R} \\[1ex]
||v-v_h||_{0,\mathcal{E}_h}  
\, \leq \, C_h \, (\frac{log(p+2)}{p})^{s+1/2} \, ||v|| _{s+1,k,\Omega_R}  \, ,\\[1ex]
||\nabla_h (v-v_h)||_{0,\mathcal{E}_h}  
\,\leq\, C_h \,  (\frac{log(p+2)}{p})^{s-1/2} \, ||v||_{s+1,k,\Omega_R}  \, ,
\end{array}
\end{equation*}
and in 3D
\begin{equation*}
\begin{array}{l}
 ||v-v_h||_{0,\Omega_i}  \,\leq \, C_h\, p^{-\ell (s+1)} \, ||v||_{s+1,k,\Omega_R} \\[1ex]
\displaystyle ||v-v_h||_{0,\mathcal{E}_h} 
\, \leq \, C_h \,  ( h^{-1/2}+1) \, p^{-\ell (s+1/2)}   \, ||v||_{s+1,k,\Omega_R}  \, ,\\[1ex]
\displaystyle ||\nabla_h (v-v_h)||_{0,\mathcal{E}_h}  
\, \leq \, C_h \,  ( h^{-1/2}+1) \, p^{-\ell (s-1/2)}   \, ||v||_{s+1,k,\Omega_R}  \, ,
\end{array}
\end{equation*}
where $\ell\equiv\ell (\mathcal{T}_h)\in (0,1]$ is $\frac{1}{\pi}$ times the maximum angle for the exterior cone condition for the mesh cells, and the constant $C_h>0$ is independent of $v$ as well as the parameters $k$, $s$ and $p$.
\end{itemize}
\end{lemma}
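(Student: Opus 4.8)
The plan is to prove Lemma~\ref{lem:approxprop} element by element, combining two classical ingredients: the approximation of homogeneous Helmholtz solutions by \emph{generalized harmonic polynomials} through Vekua's theory, and the approximation of those generalized harmonic polynomials by the plane/evanescent wave set defined in (\ref{VhKdef}). First I would fix an element $K\in\mathcal{T}_h$. Since $\mathrm{n}$ is real and piecewise constant and the mesh is conforming with it, $v|_K$ solves the constant-coefficient Helmholtz equation $\Delta w + k^2\mathrm{n}\, w = 0$ on the Lipschitz convex set $K$ with real effective wavenumber $k\sqrt{\mathrm{n}}$, and $v|_K\in H^{s+1}(K)$. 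The Vekua transform then carries $v|_K$ to a harmonic function of the same Sobolev regularity on $K$; approximating that harmonic function by a harmonic polynomial of degree $\le p$ and transforming back, the estimates of~\cite{hiptmair+al2011VekuaTh} produce a generalized harmonic polynomial $q_K\in V(K)$ of degree $\le p$ with
\[
\| v - q_K \|_{j,k,K} \;\le\; C\, h_K^{\,s+1-j}\, \| v \|_{s+1,k,K}, \qquad j=0,1,2,
\]
the constant depending only on a bound for the chunkiness parameter $s_K$, hence uniform over $\bigcup_{h>0}\mathcal{T}_h$ by the shape-regularity assumption. This is the step that consumes the smoothness index $s+1$ of $v$ and yields the $h$-powers $h^{s+1-j}$.

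Second, I would invoke the density and conditioning results collected in~\cite{moiola_PhD} to replace $q_K$ by an element $v_h|_K\in V_h(K)$. In two dimensions a dimension count gives that $N_p=2p+1$ plane waves with equispaced directions span exactly the generalized harmonic polynomials of degree $\le p$ on $K$, but the associated change of basis is ill-conditioned; this is precisely where the extra, $p$-dependent constant $C_p$ (for $h$-refinement, $p$ fixed) and the $(\log(p+2)/p)$-type factors (for $p$-refinement, $h$ fixed) enter. In three dimensions the plane-wave set only \emph{approximates} the generalized harmonic polynomials, with a loss quantified by the exterior-cone exponent $\ell(\mathcal{T}_h)\in(0,1]$, which produces the stated $p^{-\ell(\,\cdot\,)}$ rates together with the harmless $h^{-1/2}$ factor. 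Composing with the first step, $v_h|_K$ approximates $v|_K$ in the norms $\|\cdot\|_{0,K}$, $\|\cdot\|_{1,K}$, $\|\cdot\|_{2,K}$ with the advertised powers of $h$ (for $h$-refinement) or of $p$ (for $p$-refinement).

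Third, I would pass from these element-interior bounds to the skeleton norms in the statement. Applying the trace inequality (\ref{eq:traceineq}) on each $K$ to $v - v_h|_K$ gives $\|v-v_h\|_{0,\partial K}^2 \le C_K \|v-v_h\|_{0,K}(h_K^{-1}\|v-v_h\|_{0,K} + |v-v_h|_{1,K})$, and applying it to $\nabla_h(v-v_h)$ gives the analogous bound for $\|\nabla_h(v-v_h)\|_{0,\partial K}$; squaring, summing over $K\in\mathcal{T}_h$, using quasi-uniformity to replace each $h_K$ by $h$, and recombining the elementwise weighted norms via $\sum_{K}\|v\|_{s+1,k,K}^2 = \|v\|_{s+1,k,\Omega_R}^2$ (restricting the global regularity of $v$ to each element), one obtains the three displayed estimates. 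The half-power shift $h^{s+1}\to h^{s+1/2}\to h^{s-1/2}$ across the three bounds is exactly the $h_K^{1/2}$ produced by the trace inequality at each order of differentiation, and the $\mathcal{E}_h$ versions simply aggregate all facets of all elements, so no additional interface bookkeeping is needed.

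The main obstacle — and the reason the lemma is restricted to real $\mathrm{n}$ — is that both ingredients are genuinely tied to the constant-coefficient Helmholtz equation with a \emph{real} effective wavenumber: the Vekua-theoretic approximation estimates of~\cite{hiptmair+al2011VekuaTh} and the plane-wave density and conditioning estimates of~\cite{moiola_PhD} are not available when $\mathrm{n}$, hence $\sqrt{\mathrm{n}}$, is complex, a regime in which one would have to work with genuinely evanescent plane waves and a corresponding Vekua-type transform (the numerical evidence for good approximation in that case, as cited in the text, is not yet matched by theory). Under the real-$\mathrm{n}$ hypothesis, by contrast, no new estimate is needed: the proof is the bookkeeping above, the only care being to keep the constants independent of $k$, of $s$, and of the remaining refinement parameter — which follows from the $k$-explicit form of the underlying estimates (the $k$-weights being absorbed into $\|\cdot\|_{s+1,k,\Omega_R}$) together with the uniform shape-regularity and quasi-uniformity of $\{\mathcal{T}_h\}_{h>0}$.
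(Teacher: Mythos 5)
The paper does not actually prove this lemma: it is quoted verbatim from the cited references (\cite{moiola_PhD}, \cite{hiptmair+al2011VekuaTh}, and the $h$- and $p$-version analyses of Gittelson--Hiptmair--Perugia and Hiptmair--Moiola--Perugia). Your sketch reproduces the correct architecture of the proof given there --- elementwise Vekua transform to reduce to harmonic approximation, passage to generalized harmonic polynomials, replacement of those by plane waves, and the trace inequality (\ref{eq:traceineq}) to convert volume rates $h^{s+1-j}$ into the half-integer skeleton rates --- so at the level of structure you are doing exactly what the sources do.

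Two of your mechanisms are misstated, however. First, in 2D the $2p+1$ equispaced plane waves do \emph{not} span the generalized harmonic polynomials of degree $\le p$ exactly: both spaces have dimension $2p+1$, but a finite combination of plane waves has an infinite Jacobi--Anger expansion in circular waves, so the second step is itself an approximation with a residual that must be estimated (this is where the constraint $s\le\frac{p-1}{2}$, matching the required polynomial degree to the number of available directions, actually comes from, and it is the source of the $p$-dependence of $C_p$ in the $h$-version). Second, the $(\log(p+2)/p)^{s+1-j}$ rates in 2D and the $p^{-\ell(\cdot)}$ rates in 3D do not arise from ill-conditioning of a change of basis nor from the plane-wave-versus-polynomial step; they come from the rate of approximation of \emph{harmonic} functions by harmonic polynomials (Melenk's complex-analytic estimates in 2D, and their 3D analogue where the exterior cone condition of the element enters), transferred back through the Vekua operator. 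Neither correction changes the conclusion, but as written your step two asserts an exact spanning property that is false and attributes the $p$-version loss factors to the wrong source.
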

Therefore, under our assumptions on the exact solution, we have $u\in V(\mathcal{T}_h)\cap H^{s+1}( \Omega_R ) $ for some $1\leq s \leq \frac{p-1}{2}$, and thanks to the error bound of Lemma~\ref{lem:quasioptL2ok}, we conclude the following rates of convergence.
\begin{theorem}\label{lem:quasioptL2all} Assume that $\mathrm{n}$ is real.
The discrete solution $u_h\in V_h(\mathcal{T}_h)$ satisfies the error bound
\begin{itemize}
\item $h$-refinement: for $p$ fixed and when we refine the mesh,
$$
\begin{array}{l}
\displaystyle
 || u-u_h||_{0,\Omega_R } \, \leq\, C_p\,   h^{s-1} \, ||u_{inc}||_{s+1,k,\Omega_R}
 \, ,
\end{array}
$$
where $C_p$ is independent of $kh$ (but it depends on $p$ fixed);
\item $p$-refinement: for a fixed partition $\mathcal{T}_h$ and when we increase the order,
$$
\begin{array}{l}
\displaystyle
 || u-u_h||_{0,\Omega_R }    \leq   C_h\,  
 \left\{ \begin{array}{ll}
 \displaystyle\!\! \Big(\frac{\log (p+2)}{p}\Big)^{s-1/2}  &\mbox{if } m=2
    \\[2ex]
    \displaystyle p^{-\ell/2}  &\mbox{if } m=3 
 \end{array}\right\} \,  ||u_{inc}||_{s+1,k,\Omega_R}  
 \, ,
\end{array}
$$
where now  $C_h$ is independent of $p$ (but it depends on $kh$ fixed).
\end{itemize}
\end{theorem}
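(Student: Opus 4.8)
The plan is to insert the plane-wave best-approximation estimates of Lemma~\ref{lem:approxprop} into the abstract error bound of Theorem~\ref{lem:quasioptL2ok}, through the displayed bound for $|||\cdot|||_{h,+}^{2}$ that precedes Lemma~\ref{lem:approxprop}. Since $\mathrm{n}$ is assumed real, $\|\Im(\mathrm{n})\|_{\infty,\Omega_i}=0$, so that bound reduces to
$$
|||u-v_h|||_{h,+}^{2}\,\le\, C\,k\,\|u-v_h\|_{0,\mathcal{E}_h}^{2}\,+\,C\,k^{-1}\,\|\nabla_h(u-v_h)\|_{0,\mathcal{E}_h}^{2}\qquad\forall v_h\in V_h(\mathcal{T}_h),
$$
with $C$ depending on the (fixed) flux parameters $a,b,d_1,d_2$, the waveguide modes $\{\beta_j\}$, and the wave number $k$. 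I would also record at the outset a stability bound for the forward problem, $\|u\|_{s+1,k,\Omega_R}\le C\,\|u_{inc}\|_{s+1,k,\Omega_R}$ (analogous to the bounds assumed for the adjoint in (\ref{boundstab_Uphi})), which is what turns the estimates below — naturally phrased in terms of $\|u\|_{s+1,k,\Omega_R}$ — into bounds on the data $u_{inc}$.

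For the case of mesh refinement with $p$ (hence $N_p$) fixed, I would take the specific $v_h$ given by the first part of Lemma~\ref{lem:approxprop} and substitute its two skeleton estimates into the displayed bound above, obtaining
$$
\inf_{v_h\in V_h(\mathcal{T}_h)}|||u-v_h|||_{h,+}\,\le\, C_p\,\big(k\,h^{2s+1}+k^{-1}h^{2s-1}\big)^{1/2}\,\|u\|_{s+1,k,\Omega_R}.
$$
Multiplying by the prefactor of Theorem~\ref{lem:quasioptL2ok} and using $(1+(hk)^{-1})^{1/2}\le 1+(hk)^{-1/2}$, the product splits into four monomials in $h$ and $k$; the dominant one as $h\to 0^{+}$ is $(hk)^{-1/2}\cdot k^{-1/2}h^{s-1/2}=k^{-1}h^{s-1}$, while each of the other three equals $h^{s-1}$ times a strictly positive power of $h$ (up to the now-fixed $k$-factors) and is therefore absorbable for $h$ bounded. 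Folding the $k$-powers and all fixed constants into $C_p$ and applying the forward-stability bound gives $\|u-u_h\|_{0,\Omega_R}\le C_p\,h^{s-1}\|u_{inc}\|_{s+1,k,\Omega_R}$ with $C_p$ independent of $hk$.

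For the case of increasing order with $\mathcal{T}_h$ (hence $h$ and $hk$) frozen, the reasoning is identical but now uses the second part of Lemma~\ref{lem:approxprop}; the prefactor of Theorem~\ref{lem:quasioptL2ok} and the $h^{-1/2}+1$ factors appearing in the 3D estimates are harmless fixed constants. In 2D the broken-gradient estimate governs and produces the factor $(\log(p+2)/p)^{s-1/2}$, while in 3D it produces $p^{-\ell(s-1/2)}$, which at the least admissible regularity $s=1$ becomes $p^{-\ell/2}$. Collecting the fixed constants into a $p$-independent $C_h$ and invoking forward stability yields the stated $p$-rates.

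I expect the main obstacle to be the bookkeeping in the mesh-refinement estimate: the powers of $h$ and $k$ must be carried faithfully through the product of $(1+(hk)^{-1})^{1/2}$ with $\big(k\|u-v_h\|_{0,\mathcal{E}_h}^{2}+k^{-1}\|\nabla_h(u-v_h)\|_{0,\mathcal{E}_h}^{2}\big)^{1/2}$, and one must verify that the broken-gradient term — the only one that loses a full power of $h$ relative to the interior $L^2$ estimate — is the dominant contribution, so that the final order is $h^{s-1}$ and not $h^{s}$. A second point worth stating explicitly is that the restriction to real $\mathrm{n}$ enters only through Lemma~\ref{lem:approxprop}: the plane-wave approximation theory is currently established only in that case, whereas everything leading to Theorem~\ref{lem:quasioptL2ok} remains valid for lossy media.
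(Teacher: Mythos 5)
Your proposal is correct and follows essentially the same route as the paper, which obtains the theorem by inserting the best-approximation estimates of Lemma~\ref{lem:approxprop} into the bound for $|||\cdot|||_{h,+}$ (with the $\Im(\mathrm{n})$ term dropped since $\mathrm{n}$ is real) and then into Theorem~\ref{lem:quasioptL2ok}, the gradient term on the skeleton being the dominant contribution. In fact you supply more detail than the paper does --- notably the explicit forward-stability step $\|u\|_{s+1,k,\Omega_R}\le C\,\|u_{inc}\|_{s+1,k,\Omega_R}$ and the bookkeeping showing the $(1+(hk)^{-1})^{1/2}$ prefactor degrades $h^{s-1/2}$ to $h^{s-1}$ --- both of which the paper leaves implicit.
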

\begin{remark} Provided the results in Lemma \ref{lem:approxprop} hold also for complex $\mathrm{n}$ (c.f. \cite{tsukerman}), the above estimates also hold in that case.
\end{remark}


\section{Numerical results}\label{sec:numerics}
In this section we show numerical results for the proposed TDG method.  Unless otherwise stated we use $a=b=d_1=d_2=\frac{1}{2}$.  The experiments are all in 2D, and for a waveguide with the cross section $\Sigma=(0,H)$ where $H=1$.

All results are computed in Python using the SciPy sparse direct solver {\tt spsolve}.  We report the relative $L^2$ error in the examples defined as
\[
\frac{\Vert u - u_h\Vert_{L^2(\Omega_R)}}{\Vert u\Vert_{L^2(\Omega_R)}} \, ,
\]
where $u$ is the reference solution and $u_h$ is the TDG solution.
\subsection{Fundamental solution}
To test the predicted convergence rates, we first consider a waveguide with no obstacle and take as incident wave an approximation to the fundamental solution
with source point $\ybold=(-1.5R,0.3H)$ outside the computational domain $\Omega_R$. This example probes the behavior of the NtD map truncation condition, and the basic convergence estimates for plane waves. More precisely, we consider
\begin{equation}\label{def:fundsol_WG}
G^{N_f}_{k}(\xbold,\ybold) = -\sum_{j=0}^{N_f} \frac{e^{i\beta_j |x_1-y_1|}}{2i\beta_j} \,\theta_j(\hat{\xbold})\,\theta_j(\hat{\ybold}) \quad\mbox{for } \xbold, \ybold\in\Omega\,\,\mbox{with }x_1\neq y_1 \, ,
\end{equation}
where we set $N_f=20$ and $R=\frac{2\pi}{k}$ (one wavelength) for several values of the wave number $k$.
This gives a known smooth solution inside $\Omega_R$, 
in which all the integrals in TDG can be performed analytically. When $k=8.0$, for $j\geq 3$ in (\ref{def:fundsol_WG}) the modes comprising the fundamental solution are evanescent, but still must be modelled by the NtD map at $S_R$.   
Here we choose to truncate the NtD expansion at $M=15$ modes (the results show that this is sufficient to give a relative error below $10^{-8}$ at the finest discretization).

Results are shown in Fig.~\ref{fig:fund_outside_8} for the wave number $k=8.0$.  In the top panel we show convergence as the number of plane waves per element increases (we use the same number of plane waves for all the elements) for various different choices
of the mesh size $h$.  This log-linear graph suggests that $p$-convergence is approximately exponential on each mesh except for the finest grid at $N_p=15$.  This break down is likely due to ill-conditioning~\cite{hut03}.

In the bottom panel of Fig.~\ref{fig:fund_outside_8} we show convergence under mesh refinement as the mesh diameter $h$ decreases.  The log-log plot suggests a power law $h^s$ using $N_p$ plane waves such that $s$ increases as $N_p$ increases.  In Fig.~\ref{fig:fund_outside_8} we plot a trend line fitted to the convergence curve when $N_p=7$ which gives slightly over 4th order convergence.  Similarly we see approximately $O(h^{6.4}) $ convergence when $N_p=13$.  These results show faster rates of convergence  than predicted by the bounds in Theorem~\ref{lem:quasioptL2all}, and are
similar to the convergence rates seen in \cite[Fig.~1.3.4]{cessenat96}.

 \begin{figure}
     \centering
     \includegraphics{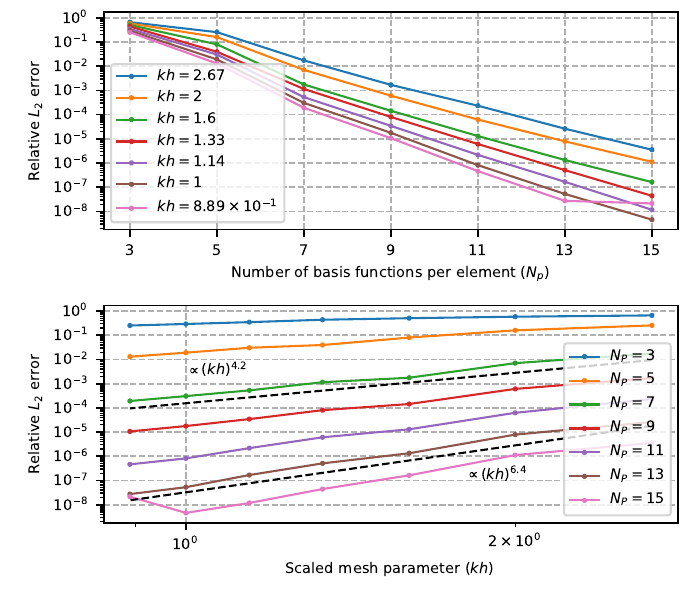}
     \caption{Convergence measured in the relative $L^2(\Omega_R)$ norm as a function of the discretization parameters.  Top panel: convergence as $N_p$, the number of plane waves per element, increases for various fixed meshes. Lower panel: convergence as the mesh diameter $h$ decreases using different numbers of directions per element. Here the approximation of the fundamental solution $G^{N_f}_k(\xbold,\ybold)$ is used as the incident field, and no scatterer is present.}
     \label{fig:fund_outside_8}
 \end{figure}

In the Fig.~\ref{fig:NTD} we study the dependence of the error in approximating the fundamental solution  on the number of modes $M$ used to approximate the NtD map. Here we consider the wave numbers $k$ between 8.0 and 32.0, and fix $R=1$, $N_p=13$ and $h=0.1$. Note that the domain size is fixed and not varied with $k$.  The error curves 
show that there is no convergence until we have included all the propagating modes in the NtD map (given by $M=\left\lfloor \frac{k\, H}{\pi}\right\rfloor$ modes, where $\lfloor \cdot \rfloor$ represent the integer part).  For higher $M$  we see convergence with a rate depending on the size of the domain in wavelengths, and the magnitude of the first purely imaginary $\beta_j$.
\begin{figure}
    \centering
    \includegraphics{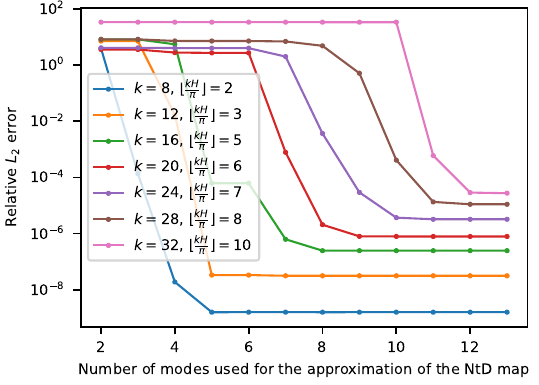}
    \caption{Relative $L^2(\Omega_R)$ error in approximating $G^{N_f}_k(\xbold,\ybold)$ as a function of the truncation parameter $M$ for the NtD map at several wave numbers.  It is necessary to include at least $\left\lfloor \frac{k\, H}{\pi}\right\rfloor$ modes (i.e. all propagating modes), and the inclusion of a few evanescent modes results in a rapid decrease in the relative error.}
    \label{fig:NTD}
\end{figure}

\subsection{Scattering from a bounded obstacle}\label{scatter}
Unlike the free space scattering problem, we do not have a simple analytic solution for scattering from an penetrable obstacle in a waveguide in order to test our method.  Therefore we compare the solution from our TDG method with a finite element solution computed on the same computational domain $\Omega_R$, but using the Perfectly Matched Layer to truncate the domain by adding an absorbing layer to the left and right of $\Omega_R$ (computed using NGSolve~\cite{netgen}).

The scatterer is a penetrable square $[-0.15,0.15]\times[0.45,0.75]$  having $\mathrm{n}=9+4i$ inside the square. The incident field corresponds to the first propagating mode with $k=8.0$, and we fix {$R=\lambda$}.  For each requested mesh size $h$, the mesh size inside the scatterer is scaled down by a factor of $1/3$ to allow for the shorter wavelength inside the scatterer (we do not scale for absorption since the absorption is quite small here).  The TDG solution and the mesh when {$kh=6.67\times 10^{-1}$  and $N_p=11$} is shown in Fig.~\ref{fig:field_absorbing}.
\begin{figure}
     \centering
     \includegraphics{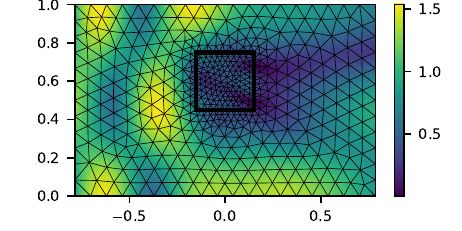}
     \caption{A density plot of the magnitude of the 
     total field computed by TDG when $kh=6.67\times 10^{-1}$ and $N_p=11$ on $\Omega_R$ for $R=1$ when the scatterer is absorbing.  The black square is the boundary of the scatterer where $\mathrm{n}=9+4i$, and the mesh is refined inside that region.}
     \label{fig:field_absorbing}
 \end{figure}
Convergence curves as the discretization parameters $kh$ and $N_p$ vary are shown in Fig.~\ref{fig:fund_outside_8a}.
As expected, in the top panel of Fig.~\ref{fig:fund_outside_8a} we see that the $p$-convergence is no longer exponential because the solution is no longer analytic, but that increasing $N_p$ brings rapid convergence until ill-conditioning starts to degrade the solution.  Convergence curves for fixed $N_p$ and varying the mesh diameter $h$ are shown in the lower panel. The $h$-convergence rate increases with $N_p$ at first but for $N_p=13$ and 15 convergence is lost on finer grids probably due to ill-conditioning. 
\begin{figure}
\centering\includegraphics{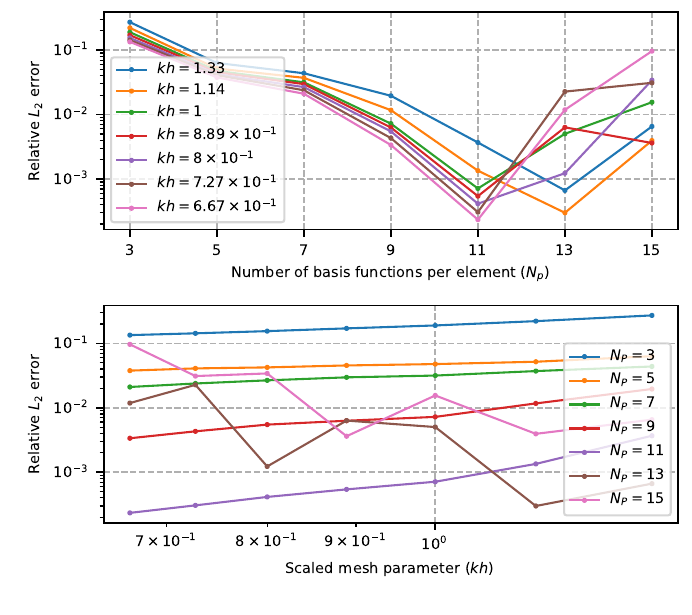}
     \caption{Convergence curves for our TDG method with the lossy penetrable scatterer described in Subsection~\ref{scatter}.  As shown in Fig.~\ref{fig:field_absorbing} the mesh inside the scatterer is refined slightly. The top panel shows the relative $L^2(\Omega_R)$ error as the number of plane waves $N_p$ is increased.  The lower panel shows the relative $L^2(\Omega_R)$ error as the mesh is refined.}
     \label{fig:fund_outside_8a}
 \end{figure}

\FloatBarrier

\subsection{Propagation through a refined mesh layer}

\begin{figure}    \centering\includegraphics{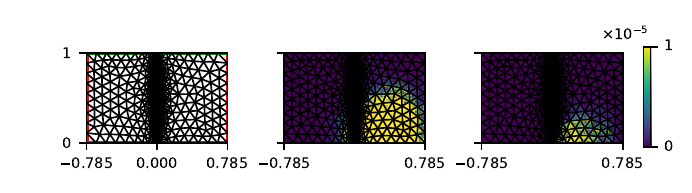}
     \caption{Left panel: The mesh when  $kh=8
     .9\times 10^{-1}$. Middle panel: a density plot of the absolute value of the error in $\Omega_R$ when $N_p=15$ and $\gamma=0$.  Right panel: a density plot of the absolute value of the error in $\Omega_R$ when $N_p=15$ and $\gamma=0.55$.}
    \label{fig:fine_mesh_error}
\end{figure}

\begin{figure}
    \centering
\resizebox{0.9\textwidth}{!}{\includegraphics{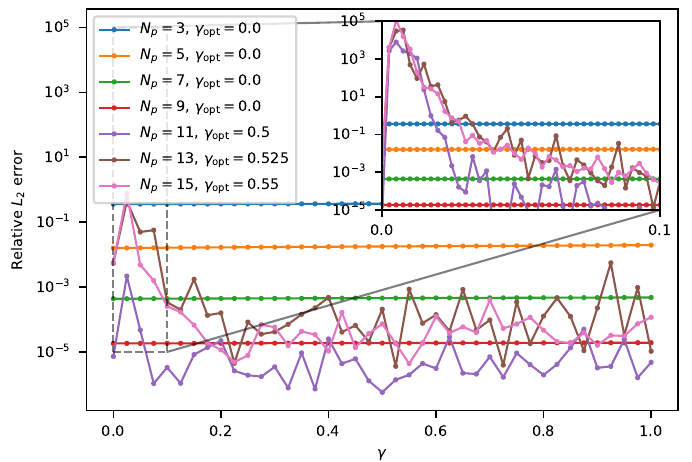}}
    \caption{The relative $L^2(\Omega_R)$ error as a function of $\gamma$ for a fixed mesh and different numbers of plane waves $N_p$ for the locally refined mesh example. Here $\gamma_{opt}$ records the value of $\gamma$ for the minimum relative error in the range $[0,1]$. For $\gamma>0$ but close to zero there is an unexplained sudden rise in the
    error.
    }
    \label{fig:cm_sweep}
\end{figure}

\begin{figure}
 \centering
\includegraphics{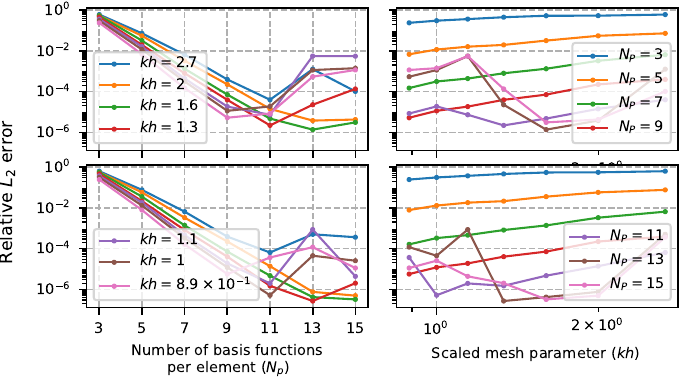}
\caption{Detailed convergence curves for the locally refined mesh example.  Top row: relative $L^2(\Omega_R)$ error as a function of $N_p$ and $h$ when $\gamma=0$. Bottom row: relative $L^2(\Omega_R)$ error as a function of $N_p$ and $h$ when $\gamma=0.55$.}
    \label{fig:fine_mesh_convergence}
\end{figure}

In \cite{hiptmair+al2014} the choice of mesh dependent parameters $a$ and $b$ in TDG methods was investigated in order to prove convergence on locally refined meshes. In this subsection we test if the use of  mesh dependent parameters $a$, $b$ and $d_1$ in our TDG method can also improve convergence in cases where too many plane waves are used on an element.

In this experiment, the waveguide does not contain any scatterer, and we seek to approximate a single propagating mode (in this case the first mode) along the waveguide in which it propagates from left to right.  However this mode has to pass through a  refined section of the grid as shown in Fig.~\ref{fig:fine_mesh_error} (left panel). In this mesh the ratio of maximum to minimum edge length is $24.6$.

Since we generally observe good approximations using the UWVF choice of parameters on quasi-uniform grids, we use, edge by edge,
\[
a\equiv a(e)=\frac{1}{2}\left(1+\gamma\left(\frac{\ell_{\rm max}}{\ell_e}-1\right)\right) \quad\mbox{and}\quad
b\equiv a(e) \qquad \mbox{on } e\in\mathcal{E}_h^I \, ,
\]
where $\gamma$ is a non-negative constant. On the boundary of $\Omega_R$, the parameter $d_1$ is chosen in the same way, but $d_2=\frac{1}{2}$. Here $\ell_{\rm max}$  is the maximum edge length in the mesh, and $\ell_e$ is the length of edge $e$.  Thus $\gamma=0$ gives the UWVF choice whereas $\gamma=1$ gives a choice of the parameter proposed in \cite{hiptmair+al2014}.

We start by choosing $k=8.0$, $R=1.0$ and $h=2.3\times10^{-1}$, and propagate the lowest order mode along the pipe.  We solve the discrete TDG problem for $\gamma\in [0,1]$, and for each choice of $\gamma$ we compute the relative $L^2(\Omega_{R})$ error in the solution compared to the exact mode.  
The results in Fig.~\ref{fig:cm_sweep} clearly show that for small numbers of plane waves, $N_p\leq 9$, the error is only slightly increasing but almost independent of $\gamma$ as $\gamma$ increases.  

For $N_p>9$ the choice of $\gamma>0$ has a profound effect.  We see an unexpected (and unexplained) rapid rise in the error by several orders of magnitude for $\gamma$ close to zero.  Increasing $\gamma$ beyond this value results in a decrease in error followed by a noisy regime.  In all cases studied here $\gamma\approx 0.5$ results in the lowest error, below that at $\gamma=0$.  However the error 
decrease is not smooth.

To investigate this further we make the choice $\gamma=0.55$ and investigate both $p$-convergence and $h$-convergence (comparing to $\gamma=0$).  In the top row of Fig.~\ref{fig:fine_mesh_convergence} we see that, for a fixed mesh size, convergence occurs as $N_p$ increases until a breakdown at $N_p\approx 9$ likely due to ill-conditioning arising from the small cells in the problem.  The choice of $\gamma=0.55$ controls the rise in error, but does not restore the convergence rate for higher values of $N_p$.

The results in the lower panel of Fig.~\ref{fig:cm_sweep} show that, for a fixed $N_p>9$, convergence ultimately breaks down as the mesh is refined. Choosing $\gamma=0.55$ helps to control the effects of this breakdown, but does not avoid it.

In Fig.~\ref{fig:fine_mesh_error} we compare a density plot of the error when $\gamma=0$ (middle panel) and $\gamma=0.55$ (right hand panel) for the locally refined mesh (left hand panel).  Although $\gamma=.55$ greatly reduces the region of maximum  error, it does not eliminate it.


\section{Conclusions and future work}\label{sec:conclusion}
We have shown how a modified form of the TDG scheme can be used to solve wave propagation problems in a waveguide with a lossy scatterer.
This approach can also be used for other applications such as scattering from a bounded object.

Our study also shows how to modify the classical TDG to handle coupling with the NtD operator.  Our convergence results  illustrate numerically  the order of convergence and demonstrate that the method works well for absorbing scatterers in a waveguide. 

Our final example investigates the use of mesh dependent parameters with the hope of improving stability and accuracy when using a locally refined grid.  The results showed almost no influence of the  mesh dependence parameter $\gamma$ for discretizations with a smaller number of directions, but mesh dependent parameters did help to control error for larger numbers of plane waves (although it did not result in continued convergence as the mesh is refined).
More investigation is needed for highly refined grids where a combination of varying $N_p$ from element to element and choosing $\gamma>0$ may improve convergence.

We have provided a convergence analysis and shown examples with a constant number of directions per element.  Our results exhibit increasing error for finer discretizations in some cases and underline the need to control ill-conditioning by varying the number of directions per element as is usually done in TDG codes.

A study of the approximation properties of plane waves when the Helmholtz equation has complex coefficients (i.e. an extension of Lemma~\ref{lem:quasioptL2all} to $\Im(n)>0$) would complement our results.


\bibliographystyle{abbrv}


\end{document}